\newtheorem{result}{\textbf{Theorem}}
\newtheorem{lemma}{Lemma}[section]
\newtheorem{definition}{Definition}[section]
\newtheorem{remark}{Remark}[section]
\newtheorem{example}{Example}[section]
\numberwithin{equation}{section}
\newcommand{\N}{\mathbb{N}}
\newcommand{\R}{\mathbb{R}}
\newcommand{\T}{\mathbb{T}}
\newcommand{\cC}{\mathcal{C}}
\newcommand{\AC}{{\rm AC\,}}
\newcommand{\BUC}{{\rm BUC\,}}
\newcommand{\Li}{L^{\infty}}
\newcommand{\Lip}{{\rm Lip\,}}
\newcommand{\ep}{\varepsilon}
\begin{document}
\title[Rate of convergence for homogenization]{Rate of convergence for homogenization of nonlinear weakly coupled Hamilton-Jacobi systems}

\author{Hiroyoshi Mitake}
\address[H. Mitake]{
	Graduate School of Mathematical Sciences,
	University of Tokyo
	3-8-1 Komaba, Meguro-ku, Tokyo, 153-8914, Japan}
\email{mitake@g.ecc.u-tokyo.ac.jp}

\author{Panrui Ni}
\address[P. Ni]{
Department 1: Graduate School of Mathematical Sciences, University of Tokyo, 3-8-1 Komaba, Meguro-ku, Tokyo 153-8914, Japan; Department 2: Shanghai Center for Mathematical Sciences, Fudan University, Shanghai 200438, China}
\email{panruini@g.ecc.u-tokyo.ac.jp}


\makeatletter
\@namedef{subjclassname@2020}{\textup{2020} Mathematics Subject Classification}
\makeatother

\date{\today}
\keywords{Homogenization; Hamilton--Jacobi equations; Weakly coupled system}
\subjclass[2020]{
	49L25, 
	35B27, 
	35B40 
}

\begin{abstract}
Here, we study the periodic homogenization problem of
nonlinear weakly coupled systems of Hamilton-Jacobi equations
in the convex setting.
We establish a rate of convergence $O(\sqrt{\varepsilon})$ which is sharp.
\end{abstract}

\date{\today}

%
%
%

%

\maketitle


\section{Introduction}

Periodic homogenization for coercive Hamilton-Jacobi equations has been first proved in \cite{LPV}. In recent years there has been much interest on quantitative homogenization for first-order Hamilton--Jacobi equations.
For convex case, the lower bound $u^\varepsilon-\bar u\geqslant -C\varepsilon$ is obtained in \cite{MTY}, as well as the upper bound $u^\varepsilon-\bar u\leqslant C\varepsilon$ under restricted assumptions by using weak KAM theory.
More recently, the optimal rate of convergence $O(\varepsilon)$ in the convex setting has been obtained in  \cite{TY} by using the curve cutting lemma. The method in \cite{TY} is generalized to study Hamilton--Jacobi equations with multiscale in \cite{HJ}.
It is worth noting that for a general non-convex setting, the best known rate is still $O(\varepsilon^{1/3})$ obtained by \cite{CDI}.
We refer to  \cite{C,HJMT,HTZ,MS,NT,QSTY,T}, and the references therein for other development of this direction.

In this paper we are concerned with the weakly coupled systems of Hamilton--Jacobi equations with a nonlinear coupling of the form:
\begin{equation}\label{E}\tag{E$_\varepsilon$}
  \left\{
   \begin{aligned}
   &\partial_t u_i^\ep(x,t)+H_i\Big(x,\frac{x}{\varepsilon},Du_i^\ep(x,t),\mathbf{u}^\ep(x,t)\Big)=0\quad \text{for}\ (x,t)\in\mathbb R^n\times(0,T), \  i\in\{1,\dots,m\},
   \\
   &u_i^\ep(x,0)=\varphi_i(x) \quad\text{for} \ x\in\R^n, \
   i\in\{1,\dots,m\},
   \end{aligned}
   \right.
\end{equation}
where
$\ep>0$, $T>0$, $n, m\in\N$, and
$\mathbf{H}=(H_1,\ldots,H_m):\mathbb R^n\times \mathbb T^n\times\mathbb R^n\times\mathbb R^m\to\mathbb R^m$,
$\mathbf{u}^\ep=(u^\ep_1,\ldots,u^\ep_m):\R^n\times[0,T]\to\R^m$ is the unknown function and $\boldsymbol{\varphi}:=(\varphi_1,\ldots,\varphi_m):\R^n\to\R^m$ is a given continuous $\R^m$-valued function. Here, we denote by $\mathbb T^n$ the standard $n$-dimensional torus.
\textit{Throughout} the paper we assume 
that for all $i\in\{1,\ldots,m\}$, 
\begin{itemize}
\item[(H1)]
$H_i\in \textrm{BUC}(\mathbb R^n\times \mathbb T^n\times B(0,R)\times\mathbb R^m)$ for each $R>0$, $H_i(x,y,p,\mathbf{u})=H_i(x,y+k,p,\mathbf{u})$ for all $k\in\mathbb Z^n$, and 
$\varphi_i\in \text{Lip}(\mathbb R^n)\cap \textrm{BUC}(\mathbb R^n)$, \item[(H2)] $\lim_{|p|\to+\infty}\inf_{x\in\mathbb R^n,y\in\mathbb T^n}H_i(x,y,p,\mathbf{0})=+\infty$,
\item[(H3)] $p\mapsto H_i(x,y,p,\mathbf{u})$ is convex for each $(x,y,\mathbf{u})\in \mathbb R^n\times\mathbb T^n\times\mathbb R^m$,
\item[(H4)] there is Lip$(H)>0$ such that $|H_i(x_1,y,p,\mathbf{u})-H_i(x_2,y,p,\mathbf{u})|\leqslant \textrm{Lip}(H)|x_1-x_2|$ for any $x_1,x_2\in\mathbb R^n$ and $(y,p,\mathbf{u})\in\mathbb T^n\times\mathbb R^n\times\mathbb R^m$,
\item[(H5)]
there is $\Theta>0$ such that 
\[
|H_i(x,y,p,\mathbf{u}^1)-H_i(x,y,p,\mathbf{u}^2)|\leqslant 
\Theta \max_{j\in\{1,\ldots,m\}}|u^1_j-u^2_j|
\] 
for any $\mathbf{u}^1,\mathbf{u}^2\in \mathbb R^m$ and $(x,y,p)\in \mathbb R^n\times\mathbb T^n\times\mathbb R^n$.
\end{itemize}
In this paper, we denote by BUC$(X)$ the set of bounded and uniformly continuous functions on a metric space $X$, and Lip$(X)$ the set of Lipschitz continuous functions on $X$. 
We also denote by $C(X;\mathbb R^m)$ (resp., $\text{Lip}(X;\mathbb R^m)$) the set of all continuous (resp., Lipschitz continuous) $\mathbb R^m$-valued functions on $X$. 
For $\textbf{u}:X\to\mathbb R^m$, we define $|\textbf{u}(x)|:=\max_{i\in \{1,\dots, m\}}|u_i(x)|$ for $x\in X$ and $\|\textbf{u}\|_{L^\infty}:=\max_{i\in \{1,\dots, m\}}\sup_{x\in X}|u_i(x)|$. 
Note that combining (H2) with (H5), we see that $H_i(x,y,p,\mathbf{u})$ is uniformly coercive on any compact set of $\mathbf{u}\in\mathbb R^m$.
We are always concerned with viscosity solutions,
and the adjective ``viscosity" is often omitted in the paper. Note that there might be unbounded solutions of \eqref{E} and \eqref{E0}. In this paper, we are only concerned with the unique solution of \eqref{E} and \eqref{E0} in BUC$(\mathbb{R}^n\times [0,T];\mathbb{R}^m)$. Similarly, we are only concerned with the unique solution of \eqref{hje} and \eqref{hj0} in BUC$(\mathbb{R}^n;\mathbb{R}^m)$.

Weakly coupled systems arise as the dynamic programming for the optimal control, random Lax--Oleinik semigroups (see \cite{DSZ, MSTY} for instance), and the well-posedness of viscosity solutions is well-established in \cite{IK}.
The convergence of homogenization of monotone weakly coupled systems of Hamilton-Jacobi equations is proved in \cite{CLL}, 
see \cite{WZ} for the non-coercive case. 
See also \cite{CM} for the second order case with a monotone nonlinear coupling, 
and see \cite{Evans2,F,MT,S} 
for the linear coupling with a penalization 
of order $\varepsilon^{-1}$. 
Here, we establish a rate of convergence $O(\sqrt{\varepsilon})$ of weakly coupled systems \eqref{E} of Hamilton-Jacobi equations with a nonlinear coupling, which may not be monotone.
To the best of our knowledge, it is the first result about the rate of convergence for homogenization of nonlinear weakly coupled systems of Hamilton-Jacobi equations of the form \eqref{E}.
We also refer to \cite{MT} for a rate of convergence for homogenization of
Hamilton--Jacobi systems with linear coupling of the order $\ep^{-1}$.
System \eqref{E} with condition (H5) on the coupling is rather general. In addition, when \eqref{E} reduces to a single equation, the result obtained in this paper also gives the rate of convergence for homogenization of single Hamilton--Jacobi equations depending Lipschitz continuously on the unknown function. For related works on this kind of equations, one can refer to \cite{IWWY,WWY}.

Moreover, we consider a stationary system \eqref{hje} with a discounted term. This kind of systems have already been discussed in \cite{DZ,IJ}. We  provide a rate of convergence $O(\sqrt{\varepsilon})$ for homogenization of this stationary system.
There have been other works on the asymptotic problems for weakly coupled systems of Hamilton--Jacobi equations.
We refer to \cite{DZ, I, IJ} for the vanishing discount problems and to \cite{CGMT, CLLN} for the large-time behavior of solutions of the Cauchy problem.
For non-monotone coupling, one can refer to \cite{JWY,N}.



\medskip
Here, we define the effective Hamiltonian $\bar{\mathbf{H}}:\mathbb R^n \times\mathbb R^n\times\mathbb R^m\to\mathbb R^m$ corresponding to \eqref{E} following \cite[Section 4]{CLL}.
\begin{definition}
For each $\mathbf{c}\in\mathbb R^m$, $x\in \mathbb R^n$, $p\in\mathbb R^n$ and $i\in\{1\dots,m\}$, there exists a unique constant $\bar H_i(x,p,\mathbf{c})$ such that there exists a solution $v$ of
\begin{equation}\label{eq:cell}
H_i(x,y,p+Dv(y),\mathbf{c})=\bar H_i(x,p,\mathbf{c})\quad \textrm{for}\ y\in \mathbb T^n.
\end{equation}
Here, we note that the equations above are decoupled.
\end{definition}
We establish the rate of convergence of the solution $\mathbf{u}^\varepsilon$ of \eqref{E} to the solution $\bar{\mathbf{u}}$ of
\begin{equation}\label{E0}\tag{$\bar{\textrm{E}}$}
  \left\{
   \begin{aligned}
   &\partial_t \bar{u}_i(x,t)+\bar{H}_i(x,D\bar{u}_i(x,t),\bar{\mathbf{u}}(x,t))=0\quad \textrm{for}\ (x,t)\in\mathbb R^n\times(0,T), i\in\{1,\ldots,m\},
   \\
   &\bar{u}_i(x,0)=\varphi_i(x)
\quad \textrm{for} \ x\in\mathbb R^n, \  i\in\{1,\dots,m\}.
   \\
   \end{aligned}
   \right.
\end{equation}
Here $\bar{\mathbf{u}}=(\bar{u}_1,\ldots,\bar{u}_m):\R^n\times[0,T]\to\R^m$ is the unknown function, and functions $\varphi_1,\ldots,\varphi_m$ are given as before.
The first main result of the paper is as follows.
\begin{result}\label{thm1}
For each $\varepsilon>0$, there is a unique solution of both \eqref{E} and \eqref{E0}, which is denoted by $\mathbf{u}^\varepsilon$ and $\bar{\mathbf{u}}$, respectively. There is a constant $C>0$ depending only on $n$, $\mathbf{H}$, $\|\varphi_i\|_{W^{1,\infty}}$ and $T$ such that
for all $t\in[\sqrt{\varepsilon}, T]$ we have
\begin{equation*}
|\mathbf{u}^\varepsilon(x,t)-\bar{\mathbf{u}}(x,t)|\leqslant C\sqrt{\varepsilon}
\quad\textrm{for all} \ x\in\R^n,
\end{equation*}
and for all $t\in(0,\sqrt{\varepsilon})$ we have
\begin{equation}\label{r2}
  |\mathbf{u}^\varepsilon(x,t)-\bar{\mathbf{u}}(x,t)|\leqslant C\min\{t,\varepsilon\}
\quad\textrm{for all} \ x\in\R^n.
\end{equation}
\end{result}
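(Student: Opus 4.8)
The plan is to treat the weakly coupled system as a perturbation of a decoupled one by freezing the coupling slot, and to run the standard doubling-of-variables / perturbed-test-function argument adapted to the $\sqrt\ep$ scale, closing the loop via the comparison principle for the limit system \eqref{E0}. First I would record the basic a priori estimates: by (H1)--(H2) together with the coercivity noted after (H5), the solutions $\mathbf u^\ep$ and $\bar{\mathbf u}$ are uniformly bounded and uniformly Lipschitz in $(x,t)$ on $\R^n\times[0,T]$, with constants depending only on $n$, $\mathbf H$, $\|\varphi_i\|_{\W}$ and $T$; in particular $|Du_i^\ep|,|D\bar u_i|\le R_0$ for some fixed $R_0$, so only the values of $H_i$ on $\R^n\times\mathbb T^n\times B(0,R_0)\times B(0,M)$ matter, and there we may use the modulus of continuity from (H1) and the Lipschitz bounds (H4), (H5).

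Next I would establish the one-sided homogenization estimate. Fix the "coupling argument" to be the known function $\bar{\mathbf u}(x,t)$ (or a frozen value thereof) and use the corrector $v=v_i(x_0,y;p,\mathbf c)$ from \eqref{eq:cell}, which by the arguments behind the Definition can be taken Lipschitz in $y$ uniformly in the parameters on compacts. Form the perturbed test function $\bar u_i(x,t)+\ep\, v_i\!\big(x_0,\tfrac{x}{\ep};D\bar u_i(x_0,t_0),\bar{\mathbf u}(x_0,t_0)\big)$ localized near a maximum point $(x_0,t_0)$ of $u_i^\ep-\bar u_i$, penalizing $\tfrac{|x-x_0|^2}{\ep}$ and $\tfrac{|t-t_0|^2}{\ep}$ as in the sharp-rate arguments; (H4) controls the error from moving $x\mapsto x_0$ inside $H_i$, the corrector equation \eqref{eq:cell} kills the oscillatory part, and (H5) converts the frozen-vs-true coupling discrepancy into a term $\Theta\,\|\mathbf u^\ep-\bar{\mathbf u}\|_\infty$ that will be absorbed by a Gronwall-type step. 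Combined with the elementary bound $|\mathbf u^\ep-\bar{\mathbf u}|\le C\min\{t,\ep\}$ near $t=0$ (which follows from the uniform Lipschitz bounds and the fact that $|H_i(x,x/\ep,p,\mathbf u)-\bar H_i(x,p,\mathbf u)|$ is bounded, giving the initial-layer estimate \eqref{r2} directly), one gets $u_i^\ep-\bar u_i\le C\sqrt\ep+ C\Theta\!\int_0^t\!\|\mathbf u^\ep-\bar{\mathbf u}\|_{L^\infty(\R^n\times[0,s])}\,ds$, uniformly in $i$.

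The symmetric estimate with the roles reversed — localizing at a minimum of $u_i^\ep-\bar u_i$, using the corrector at the argument $\mathbf u^\ep$ this time and $\bar H_i$'s definition on the other side — gives the matching lower bound with the same structure. Taking the max over $i$ of both bounds and writing $E(t):=\max_i\|u_i^\ep-\bar u_i\|_{L^\infty(\R^n\times[0,t])}$, we arrive at $E(t)\le C\sqrt\ep + C\Theta\int_0^t E(s)\,ds$, and Gronwall's inequality yields $E(T)\le C\sqrt\ep\, e^{C\Theta T}$, i.e. the claimed $O(\sqrt\ep)$ bound for $t\in[\sqrt\ep,T]$, while the $t\in(0,\sqrt\ep)$ regime is covered by the initial-layer estimate above. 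Uniqueness of solutions of \eqref{E} and \eqref{E0} is a consequence of the comparison principle for weakly coupled systems under (H1)--(H5) (cf. \cite{IK}), which I would invoke at the start.

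The main obstacle I anticipate is the two-sided corrector estimate at the $\sqrt\ep$ rate in the presence of the $x$-dependence in $H_i$ (the "macroscopic" slot) together with the nonlinear coupling: one must choose the doubling/penalization exponents so that the $\tfrac{|x-x_0|^2}{\ep}$ penalty simultaneously (i) confines the maximum point to a $\sqrt\ep$-neighborhood so that (H4) contributes only $O(\sqrt\ep)$, and (ii) leaves the corrector's $O(\ep)\cdot O(1/\ep)=O(1)$ gradient contribution controllable — this is exactly where the $\sqrt\ep$ (rather than $\ep$) rate enters, since unlike the $x$-independent single-equation case of \cite{TY} we cannot use the curve-cutting lemma to upgrade to $O(\ep)$, and indeed the abstract says $\sqrt\ep$ is sharp here. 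Making the corrector's dependence on the parameters $(x_0,p,\mathbf c)$ quantitatively Lipschitz/continuous on the relevant compact set, so that freezing them costs only $O(\sqrt\ep)$, is the technical heart of the argument.
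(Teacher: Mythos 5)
Your overall skeleton — freeze the coupling slot, prove a one-sided $O(\sqrt{\ep})$ homogenization estimate for the frozen problem, then close the loop via the coupling Lipschitz constant $\Theta$ and a Gronwall-type absorption — is in the same spirit as the paper's, which replaces Gronwall with an explicit iteration $\mathbf{u}^{\ell+1}\mapsto\mathbf{u}^{\ell}$ and the bound $|\mathbf{u}^{\ell+1}-\mathbf{u}^\ell|\leqslant \frac{(\Theta t)^\ell}{\ell!}C\sqrt{\ep}$; these two devices are morally equivalent. The real divergence, and the genuine gap, is in \emph{how} you propose to prove the frozen $O(\sqrt{\ep})$ estimate.

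You propose the perturbed-test-function / doubling route, and you explicitly lean on the assumption that the corrector $v_i(x_0,\cdot;p,\mathbf c)$ of \eqref{eq:cell} ``can be taken Lipschitz in $y$ uniformly in the parameters on compacts,'' and at the end you identify ``making the corrector's dependence on the parameters $(x_0,p,\mathbf c)$ quantitatively Lipschitz/continuous\dots{} so that freezing them costs only $O(\sqrt{\ep})$'' as the technical heart. This is not a technical heart to be filled in — it is an obstruction that the paper explicitly flags and works around. As noted in the introduction (right before Example 1.1, with a pointer to \cite[Section~5]{MTY}), for first-order problems the cell problem \eqref{eq:cell} admits in general \emph{no continuous selection} $p\mapsto v_i(x,\cdot,p,\mathbf c)$, let alone a Lipschitz one. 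So the step where you plug the frozen corrector into a perturbed test function and control the cost of freezing $(x_0,p,\mathbf c)$ by a modulus of the corrector simply does not go through. (With Lipschitz selection one would in fact get $O(\sqrt\ep)$ for $|\mathbf{u}^1-\bar{\mathbf{u}}|$ by the argument in \cite[Section~4.7]{Hung-book}, and the paper says exactly that; but it also says this hypothesis fails.) The paper sidesteps the issue entirely by working on the Lagrangian side: $\bar u_i$ and $u^1_i$ are compared through the optimal-control formulas \eqref{def:u-bar}, \eqref{def:u1}, the time interval $[0,t]$ is cut into $N\sim t/\sqrt\ep$ pieces of length $\sqrt\ep$, the Lagrangian arguments are frozen at the left endpoint of each piece at a cost $O(\sqrt\ep)$ per piece thanks to Lipschitz bounds on $\bar{\mathbf u}$ and the minimizers, and then the quantitative estimate $|m^\ep-\bar m|\leqslant C\ep$ for \emph{frozen} parameters (Lemma \ref{lem:HJ}, from \cite{HJ}) is applied piecewise. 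No regularity of the corrector in the parameters is ever needed.

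Two smaller points. First, your derivation of the near-$t=0$ bound $C\min\{t,\ep\}$ is incomplete: the $Ct$ part indeed follows from the uniform time-Lipschitz bounds and the shared initial data, but the $C\ep$ part does not follow ``directly'' from boundedness of $H_i-\bar H_i$; in the paper it is obtained by running the same partition argument with a single interval ($N=0$), giving error $2Kt^2+C\ep\leqslant(2K+C)\ep$ for $t<\sqrt\ep$, and then propagating this through the iteration (see \eqref{n'} and \eqref{eq:small-time}). Second, the Gronwall inequality you write, $E(t)\leqslant C\sqrt\ep+C\Theta\int_0^t E(s)\,ds$ with $E(t)=\|\mathbf u^\ep-\bar{\mathbf u}\|_{L^\infty(\R^n\times[0,t])}$, needs justification that the $O(\sqrt\ep)$ term is uniform in $t$ and that the argument producing it can be localized in time; the paper's iteration makes this precise by comparing $\mathbf u^{\ell+1}$ to $\mathbf u^\ell$ along the same optimal curves, where the error at each step is a clean time integral of $\Theta|\mathbf u^{\ell}-\mathbf u^{\ell-1}|$.
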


Our approach to prove Theorem \ref{thm1} is to introduce an iteration argument to approximate the solution to \eqref{E} as described in Section \ref{sec:thm1}.  By using a recent development of quantitative homogenization for Hamilton--Jacobi equations in \cite{TY, HJ},
we establish a uniform estimate for approximations defined by \eqref{func:ul} of the solution to \eqref{E} in Lemma \ref{un}, which is a key ingredient in our paper.

It is worth emphasizing that because of the dependence of $\mathbf{H}$ on $\mathbf{u}^\ep$ in \eqref{E}, it is not clear to see if we can directly apply the method introduced in \cite{CDI} by using the discount approximation works to obtain a rate of convergence of homogenization problem \eqref{E}. This quantitative problem has been untouched for systems of the form \eqref{E}. We believe that our iteration argument is necessary to obtain a rate of convergence when we consider \eqref{E}.
For instance, one can find an analogous way to construct an auxiliary function associated with \eqref{u1eq} below, and give a rate of convergence of $|\mathbf{u}^1-\bar{\mathbf{u}}|$ (see \cite{CDI, Hung-book} for details). Then we can use our iteration argument to get a rate of convergence of $|\mathbf{u}^\ep-\bar{\mathbf{u}}|$.

Moreover, it is rather important to pay attention that one can never expect a way to find $v_i(\cdot,p,\bar{\mathbf{u}}(x,t))$, where we denote by $v_i(x,\cdot,p,\bar{\mathbf{u}}(x,t))$ a solution to \eqref{eq:cell} with $\mathbf{c}=\bar{\mathbf{u}}(x,t)$, such that
\begin{equation}\label{lip-cell}
p\mapsto v_i(x,\cdot,p,\bar{\mathbf{u}}(x,t))
\quad\text{is Lipschitz continuous}.
\end{equation}
In general, we have no even continuous selection of the cell problem for the first-order Hamilton--Jacobi equation.
See \cite[Section 5]{MTY}.
If we were in a situation to have \eqref{lip-cell}, then one can obtain the rate $O(\ep^{1/2})$ for $|\mathbf{u}^1-\bar{\mathbf{u}}|$ in a general setting (see \cite[Section 4.7]{Hung-book} for details). Then by applying our iteration argument, one can obtain the rate $O(\ep^{1/2})$ for $|\mathbf{u}^\ep-\bar{\mathbf{u}}|$, which is exactly the rate provided in Theorem \ref{thm1} above.

\medskip
The following example shows the optimality of \eqref{r2}. This example is inspired by \cite[Proposition 4.1]{HJ}. For $t>\sqrt{\varepsilon}$, the optimality is unclear even for single equations (see \cite[Section 4]{HJ}).
\begin{example}\label{ex}
Let $n=1$, and consider
\begin{equation}\label{ex-1}
  \left\{
   \begin{aligned}
   &\partial_t u^\ep_1(x,t)+\frac{1}{2}(\partial_x u^\ep_1(x,t))^2-V(x/\varepsilon)+F(u^\ep_1(x,t),u^\ep_2(x,t))=0
   \quad\textrm{for} \ x\in\R, \ t\in(0,\infty), \\
   &\partial_t u^\ep_2(x,t)+(\partial_x u^\ep_2(x,t))^2+F(u^\ep_1(x,t),u^\ep_2(x,t))=0
   \quad\textrm{for} \ x\in\R, \ t\in(0,\infty), \\
   &u^\ep_1(x,0)=u^\ep_2(x,0)=0
   \quad\textrm{for} \ x\in\R,
   \end{aligned}
   \right.
\end{equation}
where $V:\T\to\R$, $F:\R^2\to\R$ are given continuous functions satisfying
\[
\min_{y\in\mathbb T^1}V(y)=0,\quad V\geqslant 1\quad \textrm{on}\quad [-1/3,1/3],\quad F(\mathbf{0})=0.
\]
Here, we note that since we consider one dimensional case, we denote by $\partial_x$ for the partial derivative with respect to $x\in\mathbb R$ instead of $D$ for the spatial gradient.
Then,  for $\varepsilon\in (0,1)$ and $t>0$, we have $\bar{\mathbf{u}}(x,t)=\mathbf{0}$ and
\[
\|\mathbf{u}^\varepsilon-\bar{\mathbf{u}}\|_{L^\infty}\geqslant \min\{t/2,\varepsilon/6\}
\quad\text{for} \ x\in\R.
\]
\end{example}

Next, we consider the stationary problem of the form:
\begin{equation}\label{hje}\tag{E$^\lambda_\varepsilon$}
  \lambda u^\ep_i(x)+H_i\Big(x,\frac{x}{\varepsilon},Du^\ep_i(x),\mathbf{u}^\ep(x)\Big)=0\quad \textrm{for} \ x\in\mathbb R^n, \ i\in\{1,\dots,m\}.
\end{equation}
For related discussion on the above system, one can refer to \cite{IJ}. Here, we give the rate of convergence of the solution of the system \eqref{hje} to the system
\begin{equation}\label{hj0}\tag{$\bar{\textrm{E}}_\lambda$}
  \lambda \bar{u}_i(x)+\bar H_i(x,D\bar{u}_i(x),\bar{\mathbf{u}}(x))=0\quad \textrm{for}\ x\in \mathbb R^n,\quad i\in\{1,\dots,m\}.
\end{equation}
The second main result in this paper is as follows. One can refer to \cite[Theorem 1.2]{HJ} for a result similar to Theorem \ref{thm2} for single equations.
\begin{result}\label{thm2}
Assume {\rm(H1)-(H5)}, and
\begin{itemize}
\item [{\rm(H6)}] for all $\mathbf{c}=(c,\ldots,c)\in \mathbb R^m$, we have $H_i(x,y,p,\mathbf{u}+\mathbf{c})=H_i(x,y,p,\mathbf{u})$ for all $(x,y,p,\mathbf{u})\in\mathbb R^n\times\mathbb T^n\times\mathbb R^n\times\R^m$.
\end{itemize}
Let $\lambda>\Theta$, where $\Theta$ is a positive constant given by {\rm(H5)}. Then, there exist unique solutions $\mathbf{u}^\varepsilon$ and $\bar{\mathbf{u}}$ of \eqref{hje} and \eqref{hj0}, respectively. There is a constant $C$ depending only on $n$, $\mathbf{H}$ and $\lambda$ such that for all $\varepsilon\in(0,\frac{1}{\lambda^2})$ we have
\[
|\mathbf{u}^\varepsilon(x)-\bar{\mathbf{u}}(x)|\leqslant C\sqrt{\varepsilon}
\quad\text{for} \ x\in\R^n.
\]
\end{result}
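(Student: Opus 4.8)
The plan is to deduce Theorem \ref{thm2} from Theorem \ref{thm1} by realizing the stationary solutions as time-independent solutions of an auxiliary discounted Cauchy problem and then closing the estimate with the contraction coming from $\lambda>\Theta$. As a preliminary step, set $K:=\max_i\sup_{\R^n\times\T^n}|H_i(x,y,0,\mathbf{0})|$. Hypothesis (H5) together with the discount makes the map sending $\mathbf{c}\in\BUC(\R^n;\R^m)$ to the (now decoupled, one equation per $i$) solution of $\lambda w_i+H_i(x,x/\ep,Dw_i,\mathbf{c}(x))=0$ a $\tfrac{\Theta}{\lambda}$-contraction in the sup norm that leaves the ball of radius $K/(\lambda-\Theta)$ invariant; the same holds for the operator built from $\bar{H}_i$, since $\bar{H}_i$ inherits from (H4), (H5) a Lipschitz dependence on $x$ and a $\Theta$-Lipschitz dependence on $\mathbf{c}$, and $|\bar{H}_i(x,0,\mathbf{0})|\le K$. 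Banach's fixed point theorem then yields the unique solutions $\mathbf{u}^\ep$ of \eqref{hje} and $\bar{\mathbf{u}}$ of \eqref{hj0}, with uniform bounds $\|\mathbf{u}^\ep\|_\infty,\|\bar{\mathbf{u}}\|_\infty\le M$ and, using the coercivity of $H_i$ on bounded $\mathbf{u}$-sets ((H2)+(H5)), also $\|D\bar{u}_i\|_\infty\le M$, for some $M$ depending only on $n$, $\mathbf{H}$, $\lambda$; note that (H6) carries over to $\bar{\mathbf{H}}$, and it is exactly the invariance exploited for single equations in \cite{HJ} on which the reduced-equation estimates below rest.

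Next, fix a nondecreasing, $1$-Lipschitz $\chi\in C(\R)$ with $\chi(s)=s$ for $|s|\le2M$ and $|\chi|\le2M$, and put $\hat{H}_i(x,y,p,\mathbf{u}):=\lambda\chi(u_i)+H_i(x,y,p,\mathbf{u})$. Then $\hat{\mathbf{H}}$ satisfies (H1)--(H5) (the extra term is bounded, uniformly continuous, independent of $(x,y,p)$, and $1$-Lipschitz in $u_i$), and by the Definition its effective Hamiltonian equals $\lambda\chi(c_i)+\bar{H}_i(x,p,\mathbf{c})$. Apply Theorem \ref{thm1} to $\hat{\mathbf{H}}$ with initial datum $\bar{\mathbf{u}}\in\Lip\cap\BUC$ (whose $\W$-norm is $\le M$): it produces the solution $\mathbf{W}^\ep$ of the $\ep$-problem and the solution $\bar{\mathbf{W}}$ of the homogenized problem, with $|\mathbf{W}^\ep-\bar{\mathbf{W}}|\le C\sqrt{\ep}$ on $\R^n\times[\sqrt{\ep},T]$. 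Since $t\mapsto\bar{\mathbf{u}}(x)$ is a time-independent solution of this homogenized Cauchy problem with datum $\bar{\mathbf{u}}$ (because $\lambda\chi(\bar{u}_i)+\bar{H}_i(x,D\bar{u}_i,\bar{\mathbf{u}})=\lambda\bar{u}_i+\bar{H}_i(x,D\bar{u}_i,\bar{\mathbf{u}})=0$ on the range $|\bar{u}_i|\le M$), uniqueness gives $\bar{\mathbf{W}}(x,t)=\bar{\mathbf{u}}(x)$, whence
\[
|\mathbf{W}^\ep(x,t)-\bar{\mathbf{u}}(x)|\le C\sqrt{\ep},\qquad t\in[\sqrt{\ep},T],
\]
with $C$ depending only on $n$, $\mathbf{H}$, $\lambda$ (via $\hat{\mathbf{H}}$, $M$, $T$). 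Choose $T=1/\lambda$, so that the admissible range becomes exactly $\ep<1/\lambda^2$.

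To transfer this to the stationary solution, a doubling-in-time maximum-principle argument shows $\|\mathbf{W}^\ep(\cdot,t)\|_\infty\le M$ for all $t$: at the first time the norm could hit $M$, all components still lie in $[-M,M]$, so $\chi$ acts as the identity there and the restoring term $\lambda M$ dominates $K+\Theta M$ by the choice of $M$, so the norm cannot increase. Hence $\mathbf{W}^\ep$ also solves $\partial_t w_i+\lambda w_i+H_i(x,x/\ep,Dw_i,\mathbf{w})=0$ with datum $\bar{\mathbf{u}}$, while $\mathbf{U}^\ep(x,t):=\mathbf{u}^\ep(x)$ solves the same equation with datum $\mathbf{u}^\ep$, and both remain in $[-M,M]$. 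A Gronwall-type comparison using $\lambda>\Theta$ (the term $\lambda w_i$ yields decay $e^{-\lambda t}$, degraded to $e^{-(\lambda-\Theta)t}$ by the $\Theta$-Lipschitz coupling) gives $\|\mathbf{U}^\ep(\cdot,t)-\mathbf{W}^\ep(\cdot,t)\|_\infty\le e^{-(\lambda-\Theta)t}\|\mathbf{u}^\ep-\bar{\mathbf{u}}\|_\infty$. Evaluating at $t=1/\lambda$ and combining with the previous display,
\[
\|\mathbf{u}^\ep-\bar{\mathbf{u}}\|_\infty=\|\mathbf{U}^\ep(\cdot,1/\lambda)-\bar{\mathbf{u}}\|_\infty\le e^{-(\lambda-\Theta)/\lambda}\|\mathbf{u}^\ep-\bar{\mathbf{u}}\|_\infty+C\sqrt{\ep},
\]
and since $e^{-(\lambda-\Theta)/\lambda}<1$ we absorb the first term to obtain $\|\mathbf{u}^\ep-\bar{\mathbf{u}}\|_\infty\le\frac{C}{1-e^{-(\lambda-\Theta)/\lambda}}\sqrt{\ep}$, with the constant depending only on $n$, $\mathbf{H}$, $\lambda$.

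The step I expect to be the main obstacle is making the a priori $L^\infty$-bound on $\mathbf{W}^\ep$ rigorous for the weakly coupled, possibly non-monotone, truncated system: one must run the maximum principle (and likewise the Gronwall comparison) with the symmetric quantity $\max_i\sup_x|\cdot|$, so that at the critical point the coupling term is controlled by the norm itself rather than by an uncontrolled oscillation. Closely related is the bookkeeping that every constant produced by Theorem \ref{thm1} depends only on $n$, $\mathbf{H}$, $\lambda$ and not on $\ep$ through the $\ep$-independent datum $\bar{\mathbf{u}}$, which is precisely where the uniform bounds of the preliminary step enter. (Alternatively, one may avoid Theorem \ref{thm1} and iterate directly on \eqref{hje}: freeze the coupling, invoke the single-equation rate of \cite{HJ} for each decoupled stationary equation, and sum the geometric series with ratio $\Theta/\lambda<1$, mirroring the iteration used for Theorem \ref{thm1}.)
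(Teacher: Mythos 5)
Your proposal is correct in its overall structure and follows a route genuinely different from the paper's. The paper attacks the stationary problem directly via the implicit representation formula over $(-\infty,0]$, partitions that half-line into intervals of length $\sqrt{\varepsilon}$, and runs the curve-cutting estimate of Lemma \ref{lem:HJ} on each piece; (H6) enters precisely because, after shifting to $\mathbf H^M=\mathbf H-\mathbf M$, the (effective) Lagrangian becomes nonnegative, and this sign is what allows the inequality $e^{-\lambda t_{k+1}}\int_{-t_{k+1}}^{-t_k}\bar L_i\,ds\geqslant e^{-\lambda(t_{k+1}-t_k)}\int_{-t_{k+1}}^{-t_k}e^{\lambda s}\bar L_i\,ds$ used to recombine the pieces into the weighted infinite-horizon integral representing $\bar u_i$. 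You instead fold the discount into the Hamiltonian by the bounded truncation $\hat H_i=\lambda\chi(u_i)+H_i$, verify (H1)--(H5) for $\hat{\mathbf H}$ (with $\hat\Theta=\lambda+\Theta$), apply Theorem \ref{thm1} on $[0,1/\lambda]$ with datum $\bar{\mathbf u}$, identify the homogenized evolution as the stationary profile by uniqueness, and close with the contraction $\|\mathbf U^\varepsilon(\cdot,t)-\mathbf W^\varepsilon(\cdot,t)\|_\infty\leqslant e^{-(\lambda-\Theta)t}\|\mathbf u^\varepsilon-\bar{\mathbf u}\|_\infty$, which follows from the monotone comparison principle after the change of unknown $v_i=e^{(\lambda-\Theta)t}w_i$. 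The restriction $\varepsilon<1/\lambda^2$ appears for you exactly so that $t=1/\lambda\geqslant\sqrt\varepsilon$. Two remarks. First, your a priori $L^\infty$-bound on $\mathbf W^\varepsilon$ should be phrased as the observation that $\pm M$ with $M=K/(\lambda-\Theta)$ are constant super/subsolutions of the truncated Cauchy problem (since $\lambda M-K-\Theta M=0$ and $\chi$ is the identity on $[-M,M]$), after which the comparison principle — available for $\hat{\mathbf H}$ via the exponential reweighting of Lemma \ref{lipu} — yields the bound; your ``first time the norm hits $M$'' phrasing is informal but the supersolution version is clean. Second, and worth highlighting: nowhere in your argument is (H6) actually used. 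Your aside that (H6) ``is exactly the invariance exploited for single equations'' is a red herring — Lemma \ref{lem:HJ} and Theorem \ref{thm1} require only (H1)--(H5) — and your $\hat{\mathbf H}$ construction, the identification $\bar{\mathbf W}=\bar{\mathbf u}$, the a priori bound, and the Gronwall contraction all go through with (H1)--(H5) and $\lambda>\Theta$ alone. This means your route, once the two comparison-principle steps are written out in full detail, appears to dispense with (H6), which the paper's representation-formula proof genuinely needs.
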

A typical case of (H6) is the linear coupling case
\[H_i(x,y,p,\mathbf{u})=h_i(x,y,p)+\sum_{j=1}^m b_{ij}u_j,\]
where $h_i:\mathbb R^n\times\mathbb T^n\times\mathbb R^n\to\mathbb R$ is a given function, and $b_{ij}\in\mathbb R$ are given constants satisfying $\sum\limits_{j=1}^mb_{ij}=0$. See \cite{DSZ,DZ} for example. Here, we note that if we do not require that $b_{ij}\leqslant 0$ for $i,j\in\{1,\ldots,m\}$ and $i\neq j$, the coupling matrix $(b_{ij})$ is not monotone (see \cite{IJ}). Then one can not expect that \eqref{hje} is strictly monotone for all $\lambda>0$. 
A more general example satisfying (H5), (H6) is 
\[H_i(x,y,p,\mathbf{u})=h_i(x,y,p)+F\Big(\sum_{j=1}^m b_{ij}u_j\Big),\]
where $F:\mathbb R\to\mathbb R$ is a Lipschitz function. 
In fact, (H6) does not imply (H5). A simple example is
\[
H_i(x,y,p,\mathbf{u})=h_i(x,y,p)+\Big(\sum_{j=1}^m b_{ij}u_j\Big)^2,
\]
where $\sum\limits_{j=1}^mb_{ij}=0$. 
The above function satisfies (H6) but does not satisfy (H5).
\begin{remark}
{\rm
In Theorem \ref{thm2}, we require $\lambda>\Theta$ to get the uniqueness of the solution of \eqref{hje} and \eqref{hj0} (see Lemma \ref{cp}). In addition, we use $\lambda>\Theta$ to prove the convergence of the sequence $\{\mathbf{u}^\ell\}_{\ell\in\mathbb N}$ in Lemma \ref{unx} below.
}
\end{remark}

\bigskip
This paper is organized as follows.
In Section \ref{sec:pre} we give basic properties of viscosity solutions of \eqref{E} and \eqref{E0}.
Section \ref{sec:thm1} is devoted to the proof of Theorem \ref{thm1}.
In Section \ref{sec:thm2} we give a proof of Theorem \ref{thm2}.

\section{Preliminaries}\label{sec:pre}
In this section we give some of basic results of solutions of \eqref{E} and \eqref{E0}.

\begin{lemma}\label{barH}
Assume  {\rm(H1)-(H5)}.   We have
\begin{itemize}
\item [{\rm(a)}] $\lim_{|p|\to+\infty}\inf_{x\in\mathbb R^n}\bar H_i(x,p,\mathbf{0})=+\infty$.
\item [{\rm(b)}] $p\mapsto \bar H_i(x,p,\mathbf{u})$ is convex for all $(x,\mathbf{u})\in \mathbb R^n \times\mathbb R^m$.
\item [{\rm(c)}]
$\bar H_i\in\BUC(\R^n\times B(0,R)\times\R^m)$ for each $R>0$, and
$|\bar H_i(x_1,p,\mathbf{u})-\bar H_i(x_2,p,\mathbf{u})|\leqslant Lip(H)|x_1-x_2|$ for any $x_1,x_2\in\mathbb R^n$ and $(p,\mathbf{u})\in \mathbb R^n\times\mathbb R^m$.
\item [{\rm(d)}] $|\bar H_i(x,p,\mathbf{u}^1)-\bar H_i(x,p,\mathbf{u}^2)|\leqslant \Theta |\mathbf{u}^1-\mathbf{u}^2|$ for any $\mathbf{u}^1,\mathbf{u}^2\in \mathbb R^m$ and $(x,p)\in \mathbb R^n\times\mathbb R^n$.
\end{itemize}
\end{lemma}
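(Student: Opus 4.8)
The plan is to deduce each property of $\bar H_i$ from the corresponding property of $H_i$ using the defining relation \eqref{eq:cell} together with the standard comparison principle for the cell problem. For part (a), I would fix $\mathbf{c}=\mathbf{0}$ and use (H2): given $A>0$, choose $M$ so that $H_i(x,y,q,\mathbf{0})\geqslant A$ for all $x,y$ whenever $|q|\geqslant M$. If $v$ solves \eqref{eq:cell} with $\mathbf{c}=\mathbf{0}$, then at a minimum point $y_0$ of $v$ (which exists since $v$ is continuous and periodic) the test function is constant, so $\bar H_i(x,p,\mathbf{0})=H_i(x,y_0,p,\mathbf{0})\geqslant A$ as soon as $|p|\geqslant M$; this is uniform in $x$, giving (a). For part (b), convexity in $p$: fix $x,\mathbf{u}$ and $p_1,p_2$, let $v_1,v_2$ be corresponding solutions, and for $\theta\in[0,1]$ set $p=\theta p_1+(1-\theta)p_2$, $v=\theta v_1+(1-\theta)v_2$. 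By (H3), $v$ is a (viscosity) subsolution of $H_i(x,y,p+Dv,\mathbf{u})\leqslant \theta\bar H_i(x,p_1,\mathbf{u})+(1-\theta)\bar H_i(x,p_2,\mathbf{u})$; comparing with the solution of the cell problem at level $p$ yields $\bar H_i(x,p,\mathbf{u})\leqslant \theta\bar H_i(x,p_1,\mathbf{u})+(1-\theta)\bar H_i(x,p_2,\mathbf{u})$.

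For parts (c) and (d) the argument is the same perturbation/stability estimate for the additive eigenvalue, run twice. Suppose $v^1$ solves \eqref{eq:cell} at data $(x_1,p,\mathbf{u}^1)$ with constant $\bar H_i(x_1,p,\mathbf{u}^1)$. By (H4) and (H5),
\[
H_i(x_2,y,p+Dv^1,\mathbf{u}^2)\leqslant H_i(x_1,y,p+Dv^1,\mathbf{u}^1)+\mathrm{Lip}(H)|x_1-x_2|+\Theta|\mathbf{u}^1-\mathbf{u}^2|
=\bar H_i(x_1,p,\mathbf{u}^1)+\mathrm{Lip}(H)|x_1-x_2|+\Theta|\mathbf{u}^1-\mathbf{u}^2|
\]
in the viscosity sense, so $v^1$ is a subsolution of the cell problem at $(x_2,p,\mathbf{u}^2)$ with right-hand side the constant on the far right; by comparison, $\bar H_i(x_2,p,\mathbf{u}^2)\leqslant \bar H_i(x_1,p,\mathbf{u}^1)+\mathrm{Lip}(H)|x_1-x_2|+\Theta|\mathbf{u}^1-\mathbf{u}^2|$. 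Swapping the roles of the two data points gives the reverse inequality, which simultaneously yields the Lipschitz bounds in $x$ (with constant $\mathrm{Lip}(H)$, setting $\mathbf{u}^1=\mathbf{u}^2$) and in $\mathbf{u}$ (with constant $\Theta$, setting $x_1=x_2$), i.e.\ the quantitative parts of (c) and (d).

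It remains to upgrade these Lipschitz estimates in $x$ and $\mathbf{u}$, together with continuity in $p$, to the full statement $\bar H_i\in\BUC(\R^n\times B(0,R)\times\R^m)$. Continuity and local uniform continuity in $p$ follow from convexity (part (b)) plus local boundedness of $\bar H_i$ (a consequence of (H1): at a min point of $v$ one has $\min_y H_i(x,y,p,\mathbf{u})\leqslant \bar H_i(x,p,\mathbf{u})$, and similarly at a max point $\bar H_i(x,p,\mathbf{u})\leqslant \max_y H_i(x,y,p,\mathbf{u})$, so $\bar H_i$ inherits a local sup-bound from $H_i$), since a finite convex function that is locally bounded is locally Lipschitz in $p$; combining this local Lipschitz-in-$p$ bound with the uniform Lipschitz-in-$(x,\mathbf{u})$ bounds already obtained gives uniform continuity on $\R^n\times B(0,R)\times\R^m$, and the sup-bound gives boundedness there. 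The main obstacle is really just bookkeeping the viscosity-sense perturbation argument carefully — making sure that adding a constant to the Hamiltonian and invoking (H4)--(H5) is legitimate at the level of viscosity sub/supersolutions, and that the comparison principle for the cell problem \eqref{eq:cell} (whose well-posedness is asserted in the Definition preceding this lemma) applies; everything else is a routine transfer of structural properties.
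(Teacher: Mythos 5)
Your proposal is mathematically sound and amounts to the standard structural-transfer argument for effective Hamiltonians, so the approach is fine. Two remarks. First, a bookkeeping slip: for a viscosity \emph{solution} $v$ of the cell problem, at a \emph{maximum} point $y_1$ of $v$ the constant test function touches from above, giving $H_i(x,y_1,p,\mathbf{c})\leqslant \bar H_i(x,p,\mathbf{c})$ and hence the \emph{lower} bound $\bar H_i\geqslant \min_y H_i$; at a \emph{minimum} point $y_0$ the constant touches from below, giving the \emph{upper} bound $\bar H_i\leqslant H_i(x,y_0,p,\mathbf{c})\leqslant \max_y H_i$. You have these reversed: for (a) you need a max point, not a min point, and in the aside inside (c) the roles of min and max are also swapped. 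The inequalities you actually use are correct, so this does not break the argument, but the labeling should be fixed, and $\bar H_i(x,p,\mathbf{0})=H_i(x,y_0,p,\mathbf{0})$ should be an inequality, not an equality.

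Second, on the comparison with the paper: for (a)--(c) the paper simply cites Evans's homogenization lemma, while you prove them directly; for (d) the paper uses the vanishing-discount approximation $\delta w^{\delta,k}\to -\bar H_i$ together with the comparison principle for the discounted cell problem, whereas you invoke the characterization of the additive eigenvalue as the infimum of constants $c$ admitting a subsolution of $H_i(x,y,p+Dw,\mathbf{c})\leqslant c$. These are equivalent routes (the vanishing-discount argument is in fact the standard proof of that characterization); yours is slightly more compact but leans on a fact the paper chooses to re-derive, while the paper's version is more self-contained since it reduces everything to the classical comparison principle for a strictly monotone stationary equation. In part (b), the claim that the convex combination $\theta v_1+(1-\theta)v_2$ is a viscosity subsolution is not automatic and deserves a sentence: it uses that the $v_j$ are Lipschitz and hence differentiable a.e., that (H3) gives the pointwise a.e.\ subsolution inequality, and that for a convex Hamiltonian a Lipschitz a.e.\ subsolution is a viscosity subsolution. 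With these small fixes the proof is complete.
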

\begin{proof}
For the proofs for (a)--(c), see \cite[Lemma 2.2]{Evans} for instance,
which are standard.
Here, we only prove (d).
Let $\delta>0$, and $\mathbf{w}^{\delta,k}$ for $k=1,2$ be solutions, respectively, to
\[
\delta w^{\delta,1}_i(y)+H_i(x,y,p+Dw^{\delta,1}_i(y),\mathbf{u}^1)=0\quad \textrm{for}\ y\in\mathbb T^n, \ i\in\{1,\dots,m\}
\]
and
\[
\delta w^{\delta,2}_i(y)+H_i(x,y,p+Dw^{\delta,2}_i(y),\mathbf{u}^2)=0\quad \textrm{for}\ y\in\mathbb T^n, \ i\in\{1,\dots,m\}.
\]
Since
\[H_i(x,y,p+Dw^{\delta,1}_i(y),\mathbf{u}^1)\leqslant H_i(x,y,p+Dw^{\delta,1}_i(y),\mathbf{u}^2)+\Theta |\mathbf{u}^1-\mathbf{u}^2|,\]
by the comparison principle, we have
\[\delta w^{\delta,1}_i-\Theta |\mathbf{u}^1-\mathbf{u}^2|\leqslant \delta w^{\delta,2}_i.\]
Sending $\delta\to 0$ yields
\[-\delta w^{\delta,1}_i\to \bar H_i(x,p,\mathbf{u}^1),\quad -\delta w^{\delta,2}_i\to \bar H_i(x,p,\mathbf{u}^2),\]
which implies that
\[\bar H_i(x,p,\mathbf{u}^2)-\bar H_i(x,p,\mathbf{u}^1)\leqslant \Theta |\mathbf{u}^1-\mathbf{u}^2|.\]
By symmetry, we get the other inequality.
\end{proof}

\begin{lemma}
For each $\varepsilon>0$, there are unique solutions of \eqref{E} and \eqref{E0}.
\end{lemma}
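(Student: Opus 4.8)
The plan is to establish existence and uniqueness for \eqref{E} by a fixed-point (Picard) argument that decouples the system, and then to obtain the same for \eqref{E0} by running the identical scheme with $\bar H_i$ in place of $H_i$, using Lemma \ref{barH} to verify that all the structural hypotheses are inherited. This is essentially the well-posedness theory of \cite{IK,CLL}; I sketch it because the coupling in \eqref{E} is, by (H5), only Lipschitz in $\mathbf u$ and is not assumed monotone, so the single scalar equations must be decoupled and the coupling absorbed through a Gronwall-type estimate.

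First I would fix $\varepsilon>0$ and work in the Banach space $X:=C([0,T];\BUC(\mathbb R^n))^m$ with the weighted norm $\|\mathbf w\|_\Lambda:=\sup_{t\in[0,T]}e^{-\Lambda t}\max_i\|w_i(\cdot,t)\|_{L^\infty}$, where $\Lambda>\Theta$ is fixed. Given $\mathbf u\in X$, each scalar Cauchy problem $\partial_t w_i+H_i(x,x/\varepsilon,Dw_i,\mathbf u(x,t))=0$ on $\mathbb R^n\times(0,T)$ with $w_i(\cdot,0)=\varphi_i$ has a unique bounded uniformly continuous viscosity solution: the coefficient $(x,t,p)\mapsto H_i(x,x/\varepsilon,p,\mathbf u(x,t))$ is continuous, uniformly coercive in $p$ on the bounded range of $\mathbf u$ by (H2) and (H5), and has the moduli of continuity in $(x,t)$ required by the classical scalar theory (from (H1), (H4) for the first slot, and the uniform continuity of $\mathbf u$ for the coupling slot). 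This defines a map $\boldsymbol\Psi:X\to X$, and its fixed points are exactly the solutions of \eqref{E}.

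The contraction estimate is the heart of the matter: if $w^k_i:=\Psi_i(\mathbf u^k)$ for $k=1,2$, then by (H5) the function $w^1_i$ is a viscosity subsolution of $\partial_t w+H_i(x,x/\varepsilon,Dw,\mathbf u^2(x,t))=\Theta\max_j|u^1_j(x,t)-u^2_j(x,t)|$, and comparison with $w^2_i$ (scalar comparison with a time-dependent forcing term) gives $\|w^1_i(\cdot,t)-w^2_i(\cdot,t)\|_{L^\infty}\le\Theta\int_0^t\max_j\|u^1_j(\cdot,s)-u^2_j(\cdot,s)\|_{L^\infty}\,ds$ and its symmetric counterpart; multiplying by $e^{-\Lambda t}$ and estimating the integral yields $\|\boldsymbol\Psi(\mathbf u^1)-\boldsymbol\Psi(\mathbf u^2)\|_\Lambda\le(\Theta/\Lambda)\|\mathbf u^1-\mathbf u^2\|_\Lambda$, a contraction since $\Lambda>\Theta$. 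Thus $\boldsymbol\Psi$ has a unique fixed point in $X$, which solves \eqref{E}. Conversely, the barriers $\varphi_i(x)\pm Ct$ with $C:=\max_i\|H_i\|_{L^\infty(\mathbb R^n\times\mathbb T^n\times B(0,\max_j\mathrm{Lip}(\varphi_j))\times\mathbb R^m)}$ (finite by (H1)) together with standard a priori estimates — or directly the comparison principle of \cite{IK} — show that every viscosity solution of \eqref{E} is bounded and uniformly continuous, hence lies in $X$ and is a fixed point of $\boldsymbol\Psi$; so the solution is unique. For \eqref{E0} one repeats the argument verbatim with $\bar H_i$, the needed coercivity, $x$-Lipschitz continuity and $\Theta$-Lipschitz dependence on $\mathbf u$ being provided by Lemma \ref{barH} (a), (c), (d). I expect the only delicate points to be the a priori bound/regularity placing viscosity solutions in $X$ and the scalar well-posedness with the oscillatory coefficient $x\mapsto x/\varepsilon$ — both standard for fixed $\varepsilon$ — with the contraction inequality above being the substantive step.
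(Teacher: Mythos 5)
Your proof is correct, but it takes a genuinely different route from the paper's. The paper handles \eqref{E0} (and implicitly \eqref{E}) by a change of unknowns: fixing $\lambda>\Theta$ and setting $v_i=e^{-\lambda t}\bar u_i$, the transformed system has Hamiltonian $\bar H^\lambda_i(x,t,p,\mathbf{u})=\lambda u_i+e^{-\lambda t}\bar H_i(x,e^{\lambda t}p,e^{\lambda t}\mathbf{u})$, which is shown to be \emph{monotone} in $\mathbf{u}$ precisely because $\lambda>\Theta$; then existence and uniqueness follow at once from the standard theory for monotone weakly coupled systems in \cite{IK,CLL}, and transforming back finishes the proof. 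Your argument instead runs a Picard iteration in the weighted space with norm $\sup_t e^{-\Lambda t}\max_i\|w_i(\cdot,t)\|_\infty$, decoupling the system by freezing the coupling slot, and gets the contraction from (H5) via scalar comparison with a time-dependent forcing. These two approaches are of course the same exponential trick in different clothing: the weight $e^{-\Lambda t}$ in your norm plays exactly the role of the paper's change of variables $u_i=e^{\lambda t}v_i$. The paper's route is shorter because it offloads everything onto the black-box theory for monotone systems; yours is more self-contained and only needs the well-posedness and comparison principle for \emph{scalar} Cauchy problems with oscillatory coefficients, plus the Gronwall-type estimate. The delicate points you flag at the end — a priori $\BUC$ regularity placing any viscosity solution in $X$, and scalar comparison with the coefficient $x\mapsto x/\varepsilon$ at fixed $\varepsilon$ — are indeed the spots that would need to be spelled out for a fully rigorous version, but they are standard and present no real obstacle, so I view your proposal as a valid alternative proof rather than a gap.
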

\begin{proof}
We only prove the existence and uniqueness of solutions of \eqref{E0}. Here we note that, once we prove the Lipschitz continuity of a solution $\mathbf{u}^\varepsilon$ of \eqref{E}, the regularity of $H_i$ in $y$ required by the comparison principle can be reduced to the continuity of $H_i$, see \cite[Theorem 2]{IJ} and \cite[Lemma B.1]{N}. Similar to the proof of Lemma \ref{lipu} below, we can prove the existence of a Lipschitz continuous solution $\mathbf{u}^\varepsilon$ of \eqref{E}. Let $\lambda>\Theta$. Setting
\begin{equation}\label{H-lam}
\bar H^\lambda_i(x,t,p,\mathbf{u}):=\lambda u_i(x,t)+e^{-\lambda t}\bar H_i(x,e^{\lambda t}p,e^{\lambda t}\mathbf{u}),
\end{equation}
we consider
\begin{equation}\label{E0'}
  \left\{
   \begin{aligned}
   &\partial_t v_i(x,t)+\bar H^\lambda_i(x,t,Dv_i(x,t),\mathbf{v}(x,t))=0\quad \textrm{for} \ (x,t)\in\mathbb R^n\times(0,T), \ i\in\{1,\dots,m\},
   \\
   &v_i(x,0)=\varphi_i(x) \quad \text{for} \ x\in\R, \ i\in\{1,\dots,m\}.
   \end{aligned}
   \right.
\end{equation}
Letting $u^1_\ell-u^2_\ell=\max_{i\in\{1,\ldots,m\}}|u^1_i-u^2_i|$,
we see that
\begin{align*}
&\bar H^\lambda_\ell(x,t,p,\mathbf{u}^1)-\bar H^\lambda_\ell(x,t,p,\mathbf{u}^2)
\\= &\lambda(u^1_\ell-u^2_\ell)+e^{-\lambda t}(\bar H_i(x,e^{\lambda t}p,e^{\lambda t}\mathbf{u}^1)-\bar H_i(x,e^{\lambda t}p,e^{\lambda t}\mathbf{u}^2))
\\ \geqslant &\lambda(u^1_\ell-u^2_\ell)-\Theta\max_{i\in\{1,\ldots,m\}}|u^1_i-u^2_i|=(\lambda-\Theta)(u^1_\ell-u^2_\ell),
\end{align*}
which implies that \eqref{E0'} is a monotone system.
Thus, by the standard theory of viscosity solutions (see \cite{CLL,IK} for instance), we obtain the unique solution $\bar{\mathbf{v}}$ to \eqref{E0'}.
Setting $\bar u_i=e^{\lambda t}\bar{v}_i$ for $i\in\{1,\dots,m\}$,
we get the conclusion.
\end{proof}

\begin{lemma}\label{lipu}
Let $\bar{\mathbf{u}}$ be the solution to \eqref{E0}.
There is a positive constant $C$ which depends on $\|\varphi_i\|_{W^{1,\infty}}$, $\bar H_i$ and $T>0$ such that
$\|\partial_t \bar u_i\|_{L^\infty}+\|D\bar u_i\|_{L^\infty}\leqslant C$ for all $i\in\{1,\ldots,m\}$.
\end{lemma}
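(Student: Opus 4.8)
The plan is a three-step a priori estimate; the one structural subtlety throughout is that $\bar{\mathbf H}$ depends on $\bar{\mathbf u}$ only Lipschitz-continuously (Lemma~\ref{barH}(d)) and \emph{not} monotonically, so every comparison argument is run through the monotone auxiliary system $\eqref{E0'}$ of the preceding lemma — for which the comparison principle genuinely holds — rather than through $\eqref{E0}$ directly. (This also yields the comparison principle for $\eqref{E0}$ itself, with an $e^{\lambda T}$ loss, $\lambda>\Theta$ fixed.)

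\emph{Step 1 ($L^\infty$ bound).} Put $M_0:=\sup\{|\bar H_i(x,0,\mathbf u)|:x\in\R^n,\ \mathbf u\in\R^m,\ i\}$, which is finite by Lemma~\ref{barH}(c). Then $\psi^{\pm}(x,t):=\pm(\|\boldsymbol\varphi\|_{L^\infty}+M_0t)\mathbf 1\in\R^m$ are, respectively, a super- and a subsolution of $\eqref{E0}$: indeed $\partial_t\psi^{+}_i+\bar H_i(x,0,\psi^{+})\geqslant M_0-M_0=0$ and $\psi^{+}_i(x,0)\geqslant\varphi_i(x)$. By comparison, $\|\bar u_i\|_{L^\infty(\R^n\times[0,T])}\leqslant\|\boldsymbol\varphi\|_{L^\infty}+M_0T=:C_0$.

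\emph{Step 2 (Lipschitz bound in $x$).} Fix $y\in\R^n$; then $\tilde u_i(x,t):=\bar u_i(x+y,t)$ solves $\eqref{E0}$ with $\bar H_i(x,\cdot,\cdot)$ replaced by $\bar H_i(x+y,\cdot,\cdot)$ and initial datum $\varphi_i(\cdot+y)$. Using Lemma~\ref{barH}(c)--(d), one checks that $w_i(x,t):=\bar u_i(x,t)+g(t)$, where $g$ solves the linear ODE $g'=\Theta g+\Lip(H)|y|$ with $g(0)=\|D\varphi_i\|_{L^\infty}|y|$ forced by the initial ordering, is a supersolution of the $\tilde u$-system dominating $\tilde u_i$ at $t=0$ — the slack in $g'$ absorbs exactly the $x$- and $\mathbf u$-Lipschitz defects. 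Comparison for the $\tilde u$-system (which satisfies the same hypotheses) gives $\tilde u_i\leqslant w_i$; since $g(t)\leqslant e^{\Theta T}\big(\|D\varphi_i\|_{L^\infty}+\Lip(H)/\Theta\big)|y|$ on $[0,T]$, symmetrizing in $y$ yields $\|D\bar u_i\|_{L^\infty}\leqslant C_2$ with $C_2$ of the asserted form.

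\emph{Step 3 (Lipschitz bound in $t$, and conclusion).} By Step~2 the spatial-gradient component $q$ of any super- or sub-differential of $\bar u_i$ at any point satisfies $|q|\leqslant C_2$; substituting into $\eqref{E0}$ and using Step~1 and Lemma~\ref{barH}(c), the time component $a$ obeys $|a|=|\bar H_i(x,q,\bar{\mathbf u}(x,t))|\leqslant\sup\{|\bar H_i(x,p,\mathbf u)|:|p|\leqslant C_2,\ \mathbf u\in\R^m\}=:C_1<\infty$. Hence $\bar u_i$ is $C_1$-Lipschitz in $t$, so $\|\partial_t\bar u_i\|_{L^\infty}+\|D\bar u_i\|_{L^\infty}\leqslant C_1+C_2=:C$, and tracing the constructions shows $C$ depends only on $\|\varphi_i\|_{\W}$, on $\bar H_i$ (through $M_0$, $\Lip(H)$, $\Theta$, the bound in Lemma~\ref{barH}(c), and the admissible $\lambda$), and on $T$. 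The expected main obstacle is precisely the non-monotone, merely Lipschitz dependence of $\bar{\mathbf H}$ on $\bar{\mathbf u}$: it obstructs a direct comparison principle for $\eqref{E0}$ and makes the textbook ``Lipschitz-in-$t$ $\Leftrightarrow$ Lipschitz-in-$x$'' bootstrap circular. I break it by systematically detouring through the monotone system $\eqref{E0'}$ (paying only the harmless $e^{\lambda T}$ factor) and, crucially, by obtaining the spatial estimate first — directly from the $x$-regularity of $\bar{\mathbf H}$ via the translation/comparison argument of Step~2 — before differentiating in $t$ at all.
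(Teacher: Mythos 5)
Your argument is correct, but it runs in the \emph{opposite} order from the paper's. The paper first gets the $t$-Lipschitz bound: after passing to the monotone system \eqref{E0'}, it checks directly that $\varphi_i(x)\pm M_\varphi t$ (with $M_\varphi$ the sup of $\bar H^\lambda_i(x,t,D\varphi_i,\boldsymbol\varphi)$) are super- and subsolutions, applies comparison to conclude $|\partial_t v_i|\leqslant M_\varphi$, and only \emph{then} extracts $\|Dv_i\|_{\Li}\leqslant C$ from the coercivity in Lemma~\ref{barH}(a) via the equation. You instead prove the spatial Lipschitz bound first, by translating in $x$ and comparing $\bar u_i(\cdot+y,\cdot)$ against $\bar u_i+g(t)$ with $g'=\Theta g+\Lip(H)|y|$ — a clean exponential supersolution argument whose ODE coefficient $\Theta$ absorbs the $\mathbf u$-Lipschitz defect of $\bar H_i$ — and then read off the $t$-bound from the equation using the $L^\infty$ and $x$-Lipschitz estimates. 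Both routes pass through the monotone auxiliary system \eqref{E0'} for comparison, and both give a constant with the stated dependence. Note, though, that your closing remark overstates the case: the paper's route (``$t$-Lipschitz from $\pm M_\varphi t$, then $x$-Lipschitz from coercivity'') is not circular — it never uses $x$-regularity to get $t$-regularity. What your argument does buy is that it avoids invoking the coercivity (a), at the price of the extra translation step. One small slip in your Step~3: the sup defining $C_1$ must be taken over $|\mathbf u|\leqslant C_0$ (from Step~1), not over all $\mathbf u\in\R^m$; although $\bar H_i(x,p,\cdot)$ is bounded here because the original $H_i$ is assumed bounded in (H1), you clearly intended to invoke Step~1 and should restrict the range accordingly so the constant has the claimed dependence.
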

\begin{proof}
We first consider the case where $\boldsymbol{\varphi}\in C^1(\mathbb{R}^n;\mathbb R^m)\cap\text{BUC}(\mathbb R^n;\mathbb R^m)$. Define
\[C_\varphi=\max_{i\in\{1,\dots,m\}}\sup_{x\in\mathbb R^n} |\bar H_i(x,D\varphi_i(x),\boldsymbol{\varphi}(x))|.\]
Let $\Theta>0$ be the constant given by (H5). 
We denote by $\mathcal{K}$ the set of all functions 
$\textbf{v}\in \text{Lip}(\mathbb R^n\times[0,T];\mathbb R^m)$ 
satisfying
\[|v_i(x,t)-\varphi_i(x)|\leq \frac{e^{\Theta t}-1}{\Theta}C_\varphi,\]
and 
\[
|\partial_t v_i(x,t)|\leq C_\varphi e^{\Theta t},\quad |Dv_i(x,t)|\leq R
\quad
\text{for} \ a.e. \ (x,t)\in\mathbb R^n\times(0,T), 
\]
where $R>0$ is a constant so that $\|D\varphi_i\|_{L^\infty}\leq R$ and
\[
\bar H_i(x,p,0)>C_\varphi e^{\Theta T}+(e^{\Theta T}-1)C_\varphi+\Theta\|\boldsymbol{\varphi}\|_{L^\infty}
\quad\text{for all} \ x\in\mathbb{R}^n \ \text{and} \ p\in\mathbb{R}^n\setminus B(0,R). 
\]

We define the map $F:\mathcal{K}\to C(\mathbb R^n\times[0,T];\mathbb R^m)$ by 
$\textbf{u}=(u_1,\ldots,u_m):=F(\textbf{v})$, 
where $u_i$ is the viscosity solution constructed by the Perron method of
\begin{equation}\label{ev}
\begin{cases}
\partial_t u_i(x,t)+\bar H_i(x,Du_i(x,t),\textbf{v}(x,t))=0 
\quad\text{for all} \ (x,t)\in\mathbb{R}^n\times(0,T), 
\\ u_i(x,0)=\varphi_i(x) 
\quad\text{for all} \ x\in\mathbb{R}^n 
\end{cases}
\end{equation}
for $i\in\{1,\ldots,m\}$. By the Perron method, we have 
\begin{align*}
&u_i(x,t)\\
=&\, 
\sup\Big\{w\in\Lip(\mathbb{R}^n\times[0,T])\mid 
w \ \text{is a subsolution to} \ \eqref{ev} \ \text{satisfying} \\ 
&
\hspace*{3.5cm}
w\le\varphi_i(x)+\sup_{(x,t)\in\mathbb R^n\times(0,T)}|\bar H_i(x,D\varphi_i(x),\textbf{v}(x,t))|t
\ \text{on} \ \mathbb{R}^n\times[0,T]
\Big\}. 
\end{align*}
In the following, we aim to show that $F$ is from $\mathcal K$ to itself. Note that \eqref{ev} is a decoupled system of Hamilton-Jacobi equations. According to \cite[Theorem 5.1]{B}, if $w_i(x,t)$ (resp., $v_i(x,t)$) is a viscosity supersolution (resp., subsolution) of \eqref{ev}, and either $w_i$ or $v_i$ is Lipschitz continuous in $x$ uniformly in $t$, then we have $w_i\geq v_i$ on $\mathbb{R}^n\times[0,T]$.

We define $u^1_i(x,t):=\varphi_i(x)-\frac{e^{\Theta t}-1}{\Theta}C_\varphi$. Since $\mathbf{v}\in\mathcal K$, we have
\begin{align*}
&\partial_t u^1_i(x,t)+\bar H_i(x,Du^1_i(x,t),\mathbf{v}(x,t))
\\ &\leq -e^{\Theta t}C_\varphi+\bar H_i(x,D\varphi_i(x),\boldsymbol{\varphi}(x))+\Theta |\mathbf{v}(x,t)-\boldsymbol{\varphi}(x)|
\\ &\leq \bar H_i(x,D\varphi_i(x),\boldsymbol{\varphi}(x))-C_\varphi\leq 0,
\end{align*}
which implies that $u^1_i$ is a classcial subsolution of \eqref{ev}. Similarly, we define $u^2_i(x,t):=\varphi_i(x)+\frac{e^{\Theta t}-1}{\Theta}C_\varphi$. One can check that it is a classical supersolution of \eqref{ev}. Since $u^1_i$ and $u^2_i$ are Lipschitz continuous in $x$, by the comparison principle we have
\[
|u_i(x,t)-\varphi_i(x)|\leq \frac{e^{\Theta t}-1}{\Theta}C_\varphi.
\]

For each $h>0$, define
\begin{equation*}
\tilde u_i(x,t)=
\begin{cases}
\varphi_i(x)-C_\varphi e^{\Theta h}t\quad &\text{for} \ t\leq h, \ x\in\mathbb{R}^n, 
\\
u_i(x,t-h)-C_\varphi he^{\Theta t}\quad &\text{for} \ t>h, x\in\mathbb{R}^n. 
\end{cases}
\end{equation*}
Then for $t\leq h$, we have
\begin{align*}
&\partial_t \tilde{u}_i(x,t)+\bar H_i(x,D\tilde{u}_i(x,t),\mathbf{v}(x,t))
\\&\leq -C_\varphi e^{\Theta h}+\bar H_i(x,D\varphi_i(x),\boldsymbol{\varphi}(x))+\Theta |\mathbf{v}(x,t)-\boldsymbol{\varphi}(x)|
\\ &\leq -C_\varphi e^{\Theta h}+\bar H_i(x,D\varphi_i(x),\boldsymbol{\varphi}(x))+C_\varphi e^{\Theta t}-C_\varphi \leq 0.
\end{align*}
For $t>h$, in the viscosity sense we have
\begin{align*}
\partial_t \tilde u_i(x,t)&=\partial_t u_i(x,t-h)-C_\varphi h \Theta e^{\Theta t}
\\ &=-\bar H_i(x,Du_i(x,t-h),\mathbf{v}(x,t-h))-C_\varphi h \Theta e^{\Theta t}
\\ &\leq -\bar H_i(x,Du_i(x,t-h),\mathbf{v}(x,t))+\Theta |\mathbf{v}(x,t)-\mathbf{v}(x,t-h)|-C_\varphi h \Theta e^{\Theta t}
\\ &\leq -\bar H_i(x,Du_i(x,t-h),\mathbf{v}(x,t))=\bar H_i(x,D\tilde u_i(x,t),\mathbf{v}(x,t)).
\end{align*}
Therefore, $\tilde u_i$ is a subsolution of \eqref{ev} almost everywhere. By the convexity of $\bar H_i$, it is also a viscosity subsolution of \eqref{ev} satisfying $\tilde u_i(x,0)=\varphi_i(x)$. Since $u$ is constructed by the Perron method, we get
\[u_i(x,t)\geq u_i(x,t-h)-C_\varphi he^{\Theta t},\]
which implies that
\[\partial_t u_i\geq -C_\varphi e^{\Theta t}.\]
Then we have
\[\bar H_i(x,Du_i,0)\leq C_\varphi e^{\Theta T}+(e^{\Theta T}-1)C_\varphi+\Theta\|\boldsymbol{\varphi}\|_{L^\infty},\]
which implies that $|Du_i|\leq R$. Define
\begin{equation*}
\tilde u_i(x,t)=
\begin{cases}
\varphi_i(x)+C_\varphi e^{\Theta h}t\quad &\text{for} \ t\leq h, x\in\mathbb{R}^n, 
\\
u_i(x,t-h)+C_\varphi he^{\Theta t}\quad & \text{for} \ t>h, x\in\mathbb{R}^n. 
\end{cases}
\end{equation*}
One can check that $\tilde u_i$ is a supersolution of \eqref{ev} for $t<h$ and $t>h$. By the comparison principle, we get $\tilde u_i(x,h)\geq u_i(x,h)$. Since $u_i$ is Lipschitz continuous in $x$ and $\tilde u_i$ is a  supersolution of \eqref{ev} for $t>h$, we get $\tilde u(x,t)\geq u_i(x,t)$ for $t>h$, that is,
\[u_i(x,t)\leq u_i(x,t-h)+C_\varphi he^{\Theta t},\]
which implies that
\[\partial_tu_i\leq C_\varphi e^{\Theta t}.\]
This shows that $F$ is from $\mathcal K$ to itself.

It is clear that $\boldsymbol{\varphi}\in\mathcal{K}$. We define $\mathbf{u}^\ell:=F^\ell(\boldsymbol{\varphi})$ for $\ell\in\N$. Then using the optimal control formula of $u^\ell_i(x,t)$, we can prove
\[|\mathbf{u}^{\ell+1}(x,t)-\mathbf{u}^\ell(x,t)|\leq \frac{(\Theta t)^\ell}{\ell !}\|\mathbf{u}^1-\boldsymbol{\varphi}\|_{L^\infty}\]
by induction. We skip the proof since it is quite similar to that of  Lemma \ref{un}. Since $\mathbf{u}^1\in\mathcal{K}$, $\|\mathbf{u}^1-\boldsymbol{\varphi}\|_{L^\infty}$ is bounded. Then for all $m>\ell$, we have
\[\|\mathbf{u}^m-\mathbf{u}^\ell\|_{L^\infty}\leq \sum_{k=\ell}^{m-1}\|\mathbf{u}^{k+1}-\mathbf{u}^k\|_{L^\infty}\leq  \|\mathbf{u}^{1}-\boldsymbol{\varphi}\|_{L^\infty}\sum_{k=\ell}^{m-1}\frac{(\Theta T)^k}{k!}\to 0\quad \text{as}\quad\ell\to\infty.\]
Thus, the sequence $\{\mathbf{u}^\ell\}_{\ell\in\N}$ is a Cauchy sequence in $(C_b(\mathbb R^n\times[0,T]),\|\cdot\|_{L^\infty})$, where $C_b$ stands for the space of all bounded and continuous functions. As a consequence, $\mathbf{u}^\ell$ converges to a continuous function $\mathbf{u}^\infty$ uniformly on $\R^n\times[0,T]$. By the stability of viscosity solutions, $\mathbf{u}^\infty$ is a solution of \eqref{E0}. Since $\mathcal K$ is closed, $\bar{\mathbf{u}}\in \mathcal{K}$. Thus, we conclude that $\bar{\textbf{u}}$ has the Lipschitz constant depending on $T$.

We finish the proof by an approximation. For $\boldsymbol{\varphi}\in \text{Lip}(\mathbb R^n;\mathbb R^m)\cap\text{BUC}(\mathbb R^n;\mathbb R^m)$, we take a sequence $\{\boldsymbol{\varphi}^k\}\subset C^1(\R^n;\R^m)$ with $\|\varphi^k_i-\varphi_i\|_{W^{1,\infty}}\to 0$. Let $\bar{\mathbf{u}}^k$ be the solution of \eqref{E0} with $\bar{u}^k_i(x,0)=\varphi^k_i(x)$. According to the argument above, $\{\bar{\mathbf{u}}^k\}$ is uniformly bounded and equi-Lipschitz continuous. Up to a subsequence, $\{\bar{\mathbf{u}}^k\}$ locally uniformly converges to a solution $\bar{\textbf{u}}$ of \eqref{E0}. Then $\bar{\textbf{u}}$ has the Lipschitz constant depending on $T$.
\end{proof}

\begin{remark}
{\rm
When the coupling of $\bar{\mathbf{H}}$ is monotone, one can easily check that $\hat v_i(x,t)=\varphi_i(x)+C_\varphi t$ (resp., $\check v_i(x,t)=\varphi_i(x)-C_\varphi t$) is a supersolution (resp., subsolution) of \eqref{E0}. Then by the comparison principle one gets a time global estimate $|\partial_t \bar u_i|+|D\bar u_i|\leqslant C$, where $C>0$ is independent of $T$.}
\end{remark}

\section{Proof of Theorem {\rm\ref{thm1}}}\label{sec:thm1}
In this section we give a proof of Theorem \ref{thm1}. 
Similar to Lemma \ref{lipu}, we can prove that there is a positive constant $C$ which depends on $\|\varphi_i\|_{W^{1,\infty}}$, $H_i$ and $T>0$ such that
$\|\partial_t u^\ep_i\|_{L^\infty}+\|Du^\ep_i\|_{L^\infty}\leqslant C$ for all $i\in\{1,\ldots,m\}$. Then we can modify $\textbf{H}$ and $\bar{\textbf{H}}$ to be superlinear for $|p|>C$. Then the Lagrangians associated with $H_i$ and $\bar H_i$ are finite valued. In this section, we assume that both $H_i$ and $\bar H_i$ are superlinear. 

Let $\bar{u}$ be the solution of \eqref{E0}, and then it is rather clear to see that we have the implicit representation formula
\begin{equation}\label{def:u-bar}
\bar u_i(x,t)=\inf_{\gamma\in\cC(x;t)}\bigg\{\varphi_i(\gamma(0))+\int_0^t \bar L_i(\gamma(s),\dot\gamma(s),\bar{\mathbf{u}}(\gamma(s),s))\, ds\bigg\},
\end{equation}
where we set
$\bar L_i(x,v,\mathbf{u}):=\sup_{p\in\mathbb R^n}(v\cdot p-\bar H_i(x,p,\mathbf{u}))$ for $i\in\{1,\ldots,m\}$, and
$\cC(x;t)$ denotes the set of all trajectories $\gamma\in \AC([0,t])$
such that $\gamma(t)=x$.
Here, we denote by $\AC([a,b])$ for $-\infty\le a\le b\le\infty$ the set of absolutely continuous functions on $[a,b]$ with values in $\R^n$.


According to Lemma \ref{lipu}, it is standard to show the following property.
\begin{lemma}\label{lipgm}
The infimum of \eqref{def:u-bar} is achieved, and let $\bar\gamma\in\cC(x;t)$
be a minimizer.
There is a constant $M_0$ depending on $\bar{\mathbf{H}}$, $\|\boldsymbol{\varphi}\|_{W^{1,\infty}}$ and $T$ such that $\|\dot{\bar{\gamma}}\|_{L^\infty}\leqslant M_0$.
\end{lemma}

We now define the function $u^1_i:\R^n\times[0,T]\to\R$ by
\begin{equation}\label{def:u1}
u^1_i(x,t)=\inf_{\gamma\in\cC(x;t)}\bigg\{\varphi_i(\gamma(0))+\int_0^t L_i\Big(\gamma(s),\frac{\gamma(s)}{\varepsilon},\dot\gamma(s),\bar{\mathbf{u}}(\gamma(s),s)\Big)\, ds\bigg\}
\end{equation}
for $i\in\{1,\ldots,m\}$, where we set
$
L_i(x,y,v,\mathbf{u}):=\sup_{p\in\mathbb R^n}(v\cdot p-H_i(x,y,p,\mathbf{u})).
$
It is rather standard to see that $\mathbf{u}^1$ is continuous on $\R^n\times[0,\infty)$, and is the solution to
\begin{equation}\label{u1eq}
  \left\{
   \begin{aligned}
   &\partial_t u_i(x,t)+H_i\Big(x,\frac{x}{\varepsilon},Du_i(x,t),\bar{\mathbf{u}}(x,t)\Big)=0\quad \textrm{for} \ (x,t)\in\mathbb R^n\times(0,T), \ i\in\{1,\dots,m\},
   \\
   &u_i(x,0)=\varphi_i(x)\quad \text{for} \ x\in\R^n, \  i\in\{1,\dots,m\}.
   \\
   \end{aligned}
   \right.
\end{equation}

\begin{lemma}\label{lipgm1}
The infimum of \eqref{def:u1} is achieved, and let $\gamma_1\in\cC(x;t)$
be a minimizer.
There is a constant $M_0$ depending on $\mathbf{H}$, 
$\|\boldsymbol{\varphi}\|_{W^{1,\infty}}$ and $T$ such that $\|\dot{\gamma_1}\|_{L^\infty}\leqslant M_0$.
\end{lemma}

For $c,x,y\in\mathbb R^n$, $d\in\mathbb R^m$, $\varepsilon>0$, 
$i\in\{1,\ldots,m\}$ and $0\leqslant t_1\leqslant t_2<+\infty$, we set
\[
m_i^\varepsilon(t_1,t_2,x,y;c,d):=
\inf
\left\{\int_{t_1}^{t_2}L_i\Big(c,\frac{\gamma(s)}{\varepsilon},\dot\gamma(s),d\Big)\,ds\mid \gamma\in\cC(x,y;t_1,t_2)
\right\},
\]
and
\[
\bar m_i(t_1,t_2,x,y;c,d):=
\inf\left\{\int_{t_1}^{t_2}\bar L_i(c,\dot\gamma(s),d)\, ds\mid\gamma\in\cC(x,y;t_1,t_2)
\right\},
\]
where we denote by $\cC(x,y;a,b)$ for $x,y\in\R^n$, $a,b\in\R$
the set of all trajectories $\gamma\in \AC([0,t])$
satisfying $\gamma(a)=x$, $\gamma(b)=y$.

The next lemma is a direct result of \cite[Lemma 1.1]{HJ}.
\begin{lemma}\label{lem:HJ}
Fix $c\in\R^n$, $d\in\R^m$.
Let $x,y\in\mathbb R^n$, $\varepsilon,t>0$ and $M_0$ with $|y-x|\leqslant M_0t$.
There exists a constant $C>0$ depending on $n$, $M_0$, $\mathbf{H}$ and $\|\boldsymbol{\varphi}\|_{W^{1,\infty}}$ such that
\[
|m_i^\varepsilon(0,t,x,y;c,d)-\bar m_i(0,t,x,y;c,d)|\leqslant C\varepsilon
\]
for all $i\in\{1,\ldots, m\}$. 
\end{lemma}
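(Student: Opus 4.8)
The plan is to deduce the estimate by \emph{freezing} the variables $c$ and $d$ and recognizing $m^\varepsilon$ and $\bar m$ as the ($\varepsilon$-scaled and homogenized) action functionals of a single periodic Hamilton--Jacobi problem, so that \cite[Lemma 1.1]{HJ} applies directly.

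First I would fix $c\in\R^n$, $d\in\R^m$ and set $\tilde H_i(y,p):=H_i(c,y,p,d)$, with Legendre transform $\tilde L_i(y,v):=\sup_p\big(v\cdot p-\tilde H_i(y,p)\big)=L_i(c,y,v,d)$. I would then check that $\tilde H_i$ satisfies the hypotheses used for periodic homogenization in \cite{TY,HJ}: it is $\mathbb T^n$-periodic in $y$, lies in $\BUC(\mathbb T^n\times B(0,R))$ for each $R$ by (H1), is convex in $p$ by (H3), and is coercive, $\lim_{|p|\to\infty}\inf_y\tilde H_i(y,p)=+\infty$, by (H2) together with (H5) — this is precisely the uniform coercivity on compact sets of $\mathbf u$ noted after the hypotheses. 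Hence $\tilde L_i$ is continuous, convex and superlinear in $v$, with moduli controlled by the data of $\mathbf H$. The cell problem for $\tilde H_i$ is exactly \eqref{eq:cell} with $x=c$ and $\mathbf c=d$, so its effective Hamiltonian is $p\mapsto\bar H_i(c,p,d)$ and its effective Lagrangian is $v\mapsto\bar L_i(c,v,d)$; consequently $m^\varepsilon(0,t,x,y;c,d)$ and $\bar m(0,t,x,y;c,d)$ are exactly the $\varepsilon$-action of $\tilde L_i$ and the homogenized action of $\bar L_i(c,\cdot,d)$ joining $x$ at time $0$ to $y$ at time $t$ (the latter equal to $t\,\bar L_i(c,\tfrac{y-x}{t},d)$ by convexity). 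Since the constraint $|y-x|\le M_0t$ together with the superlinearity of $\tilde L_i$ forces the minimizing curves to have $L^\infty$-bounded speed depending only on $M_0$ and the data of $\mathbf H$, the relevant momenta stay in a fixed ball, and \cite[Lemma 1.1]{HJ}, which rests on the curve cutting lemma of \cite{TY}, then yields $|m^\varepsilon-\bar m|\le C\varepsilon$.

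The only delicate point — which I would address explicitly — is the uniformity of $C$ in the frozen parameters, since \cite[Lemma 1.1]{HJ} concerns one fixed periodic Hamiltonian. Uniformity in $c$ follows from (H4): changing $c$ perturbs $\tilde H_i$ uniformly by at most $\Lip(H)|c_1-c_2|$, so the coercivity modulus, the convexity and the oscillation of the correctors are controlled independently of $c$. Uniformity in $d$ is subtler, because (H5) controls $\tilde H_i$ only up to an additive quantity of size $\Theta|d|$; but $m^\varepsilon-\bar m$ is invariant under additive shifts of the Hamiltonian ($m^\varepsilon$ and $\bar m$ both change by $-kt$ under $\tilde H_i\mapsto\tilde H_i+k$), so the rate is insensitive to the level induced by $d$, and for $d$ ranging in any bounded set — all that is used when this lemma is invoked with $d=\bar{\mathbf u}$, bounded by Lemma \ref{lipu} — the constant stays uniform. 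I expect this uniformity bookkeeping, rather than any genuinely new estimate, to be the only real content beyond citing \cite[Lemma 1.1]{HJ}.
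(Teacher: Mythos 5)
Your proposal is correct and takes essentially the same route as the paper, which simply declares the lemma ``a direct result of'' \cite[Lemma 1.1]{HJ} after observing that freezing $c$ and $d$ reduces $m^\varepsilon$ and $\bar m$ to the $\varepsilon$-scaled and homogenized actions of a single periodic Hamiltonian with the required convexity, coercivity, and continuity (uniform in $c$ by (H1), (H2), (H4), and uniform for $d$ in bounded sets by (H1), (H5)). The only imprecision is your additive-shift remark: (H5) bounds $|\tilde H_i^{d_1}-\tilde H_i^{d_2}|$ pointwise by $\Theta|d_1-d_2|$, not by a constant shift, so shift-invariance of $m^\varepsilon-\bar m$ does not by itself yield uniformity in $d$; what actually closes the argument is exactly what you also say, namely that $d$ stays in the bounded set $\{\bar{\mathbf u}\}$ when the lemma is invoked.
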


\begin{lemma}\label{u1}
There is a constant $C>0$ depending only on $n$, $\mathbf{H}$, 
$\|\boldsymbol{\varphi}\|_{W^{1,\infty}}$ and $T$ such that for $t\in[\sqrt{\varepsilon},T]$ we have
\[
|\mathbf{u}^1(x,t)-\bar{\mathbf{u}}(x,t)|\leqslant C\sqrt{\varepsilon}
\quad\text{for all} \ x\in\R, \ \ep>0,
\]
and for $t\in(0,\sqrt{\varepsilon})$ we have
\[
|\mathbf{u}^1(x,t)-\bar{\mathbf{u}}(x,t)|\leqslant C \varepsilon
\quad\text{for all} \ x\in\R, \ \ep>0.
\]
\end{lemma}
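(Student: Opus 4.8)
The plan is to estimate $|\mathbf{u}^1(x,t)-\bar{\mathbf{u}}(x,t)|$ by comparing the two control-theoretic representation formulas: $\bar u_i$ is given by \eqref{def:u-bar} with running cost $\bar L_i(\gamma(s),\dot\gamma(s),\bar{\mathbf{u}}(\gamma(s),s))$, while $u^1_i$ is given by \eqref{def:u1} with running cost $L_i(\gamma(s),\gamma(s)/\varepsilon,\dot\gamma(s),\bar{\mathbf{u}}(\gamma(s),s))$. The difference between the two will be controlled via the $O(\varepsilon)$ estimate of Lemma \ref{lem:HJ} applied on small time subintervals, the price being an error that accumulates over the number of subintervals and a discretization error coming from freezing the slow variable $x$ and the vector $\bar{\mathbf{u}}$ on each subinterval. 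Balancing these competing errors against the subinterval length is what produces the $\sqrt{\varepsilon}$ rate.

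\smallskip

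More concretely, first I would fix $i$, a point $x\in\R^n$ and a time $t$, and take a minimizing curve $\bar\gamma\in\cC(x;t)$ for $\bar u_i(x,t)$; by Lemma \ref{lipgm} it satisfies $\|\dot{\bar\gamma}\|_{L^\infty}\le M_0$, and by Lemma \ref{lipu} the map $(x,s)\mapsto\bar{\mathbf{u}}(x,s)$ is Lipschitz. Partition $[0,t]$ into $N$ subintervals $[t_{k-1},t_k]$ of equal length $h=t/N$. On each subinterval, replace the frozen arguments: compare $\int_{t_{k-1}}^{t_k}\bar L_i(\bar\gamma(s),\dot{\bar\gamma}(s),\bar{\mathbf{u}}(\bar\gamma(s),s))\,ds$ with $\bar m(t_{k-1},t_k,\bar\gamma(t_{k-1}),\bar\gamma(t_k);c_k,d_k)$ where $c_k=\bar\gamma(t_{k-1})$ and $d_k=\bar{\mathbf{u}}(\bar\gamma(t_{k-1}),t_{k-1})$; the Lipschitz continuity of $\bar L_i$ in $x$ and $\mathbf{u}$ (inherited from (H4), (H5) and Lemma \ref{barH}(c),(d), on the relevant compact velocity set) together with $|\bar\gamma(s)-c_k|\le M_0 h$ and $|\bar{\mathbf{u}}(\bar\gamma(s),s)-d_k|\le C h$ gives an error $O(h)$ per unit time, hence $O(h)$ total after summing $N$ intervals (each contributing $O(h^2)$, times $N=t/h$). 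Then invoke Lemma \ref{lem:HJ} on each subinterval to replace $\bar m(t_{k-1},t_k,\cdot,\cdot;c_k,d_k)$ by $m^\varepsilon(t_{k-1},t_k,\cdot,\cdot;c_k,d_k)$ at cost $C\varepsilon$ each, i.e. $CN\varepsilon=Ct\varepsilon/h$ total. Finally, unfreeze the arguments in the $m^\varepsilon$ functional and reassemble an admissible competitor for $u^1_i(x,t)$ by concatenating the subinterval-optimal curves, paying another $O(h)$ of Lipschitz error; this yields $u^1_i(x,t)\le \bar u_i(x,t)+C(h+t\varepsilon/h)$. Choosing $h=\sqrt{\varepsilon}$ (legitimate as long as $h\le t$, i.e. $t\ge\sqrt{\varepsilon}$) gives $u^1_i(x,t)-\bar u_i(x,t)\le C\sqrt{\varepsilon}$. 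The reverse inequality is obtained symmetrically, starting from a minimizer $\gamma_1$ for $u^1_i$ (Lemma \ref{lipgm1} provides the speed bound $\|\dot\gamma_1\|_{L^\infty}\le M_0$) and running the same argument with the roles of $m^\varepsilon$ and $\bar m$ swapped.

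\smallskip

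For the short-time regime $t\in(0,\sqrt{\varepsilon})$, the subdivision scheme above is not available since $h=\sqrt\varepsilon>t$, so instead I would use $N=1$: directly $|u^1_i(x,t)-\bar u_i(x,t)|\le |m^\varepsilon(0,t,\cdot,\cdot;c,d)-\bar m(0,t,\cdot,\cdot;c,d)|$ plus the $O(t)$ freezing error, where the first term is $\le C\varepsilon$ by Lemma \ref{lem:HJ} and the second is $\le Ct\le C\sqrt\varepsilon\cdot\sqrt\varepsilon\le C\varepsilon$ since $t<\sqrt\varepsilon$. This gives the claimed $O(\varepsilon)$ bound on $(0,\sqrt\varepsilon)$. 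The main obstacle I anticipate is the bookkeeping in the freezing/unfreezing steps: one must verify that all the relevant velocities stay in a fixed compact set (so that the Lagrangians $L_i$, $\bar L_i$ are Lipschitz there with a uniform constant), that the minimizing curves for the frozen subproblems $m^\varepsilon$ satisfy the same speed bound $M_0$ as in Lemmas \ref{lipgm}–\ref{lipgm1} (this uses (H2)/coercivity uniformly on the compact set of $\mathbf{u}$-values traversed by $\bar{\mathbf{u}}$), and that the constant in Lemma \ref{lem:HJ} is indeed uniform in $c$ and $d$ ranging over these compact sets — which is exactly how that lemma is stated. Once these uniformities are in place, the optimization $h=\sqrt\varepsilon$ is routine and delivers the sharp $\sqrt\varepsilon$ rate with a constant depending only on $n$, $\mathbf{H}$, $\|\varphi_i\|_{W^{1,\infty}}$ and $T$.
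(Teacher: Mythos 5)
Your proposal takes essentially the same route as the paper: partition $[0,t]$ into subintervals of length $h$, freeze the slow variable and the $\bar{\mathbf{u}}$-argument at the left endpoint of each subinterval, invoke Lemma~\ref{lem:HJ} to pass between $m^\varepsilon$ and $\bar m$ at cost $O(\varepsilon)$ per subinterval, and balance the accumulated $O(N\varepsilon)$ against the quadratic freezing errors $O(Nh^2)$ to get $O(\sqrt\varepsilon)$ at $h=\sqrt\varepsilon$; the only cosmetic difference is which one-sided inequality you prove explicitly (you start from a minimizer $\bar\gamma$ of $\bar u_i$ and concatenate $m^\varepsilon$-minimizers to build a competitor for $u^1_i$, whereas the paper starts from a minimizer $\gamma_1$ of $u^1_i$ and concatenates $\bar m$-minimizers, then appeals to symmetry for the other direction). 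One small slip in the short-time regime: the freezing error on the single interval $[0,t]$ is $O(t^2)$ (an $O(|s-t_k|)$ integrand integrated over length $t$), not $O(t)$, and then $t<\sqrt\varepsilon$ gives $Ct^2\leqslant C\varepsilon$ directly; your written chain $Ct\leqslant C\sqrt\varepsilon\cdot\sqrt\varepsilon$ is off, since $t<\sqrt\varepsilon$ only yields $Ct\leqslant C\sqrt\varepsilon$, but the underlying quadratic estimate (the one the paper uses via $E_0,\bar E_0\leqslant Kt^2$) is what actually delivers the $O(\varepsilon)$ bound.
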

\begin{proof}
We only prove that
\[
u^1_i(x,t)-\bar u_i(x,t)\geqslant -C\sqrt{\varepsilon}
\quad \text{for all} \ t\in[\sqrt{\varepsilon},T],
\]
and
\[
u^1_i(x,t)-\bar u_i(x,t)\geqslant -C \varepsilon
\quad \text{for all} \ t\in(0,\sqrt{\varepsilon}),
\]
as the other inequalities are similarly obtained.
We consider a partition of $[0,t]$:
\[0=t_0\leqslant t_1\leqslant t_2\leqslant \dots\leqslant t_k\leqslant \dots\leqslant t_N\leqslant t_{N+1}=t.\]
For each $k=0,\dots,N$, we have
\begin{equation}\label{1}
\begin{aligned}
&\int_{t_k}^{t_{k+1}}L_i\left(\gamma_1(s),\frac{\gamma_1(s)}{\varepsilon},\dot\gamma_1(s),\bar{\mathbf{u}}(\gamma_1(s),s)\right)\, ds
\\ \geqslant\,
&\int_{t_k}^{t_{k+1}}L_i\left(\gamma_1(t_k),\frac{\gamma_1(s)}{\varepsilon},\dot\gamma_1(s),\bar{\mathbf{u}}(\gamma_1(t_k),t_k)\right)\, ds-E_k
\\ \geqslant\,
& m_i^\varepsilon(t_k,t_{k+1},\gamma_1(t_k),\gamma_1(t_{k+1});\gamma_1(t_k),\bar{\mathbf{u}}(\gamma_1(t_k),t_k))-E_k
\\ \geqslant\,
& \bar m_i(t_k,t_{k+1},\gamma_1(t_k),\gamma_1(t_{k+1});\gamma_1(t_k),\bar{\mathbf{u}}(\gamma_1(t_k),t_k))-C\varepsilon-E_k,
\end{aligned}
\end{equation}
where we set
\[
E_k:=\int_{t_k}^{t_{k+1}}\bigg[\textrm{Lip}(H)|\gamma_1(s)-\gamma_1(t_k)|+\Theta\Big(|\partial_t\bar{\mathbf{u}}|\cdot|s-t_k|+|D\bar{\mathbf{u}}|\cdot|\gamma_1(s)-\gamma_1(t_k)|\Big)\bigg]\, ds,
\]
and we have used Lemma \ref{lem:HJ} in the third inequality. 
Here, we use the notations 
$|\partial_t\bar{\textbf{u}}|:=\max_{i\in\{1,\dots,m\}}|\partial_t u_i|$ and $|D\bar{\textbf{u}}|:=\max_{i\in\{1,\dots,m\}}|Du_i|$. 

Now, take $\gamma^k\in\AC([t_k,t_{k+1}])$ with $\gamma^k(t_k)=\gamma_1(t_k)$ and $\gamma^k(t_{k+1})=\gamma_1(t_{k+1})$ be a curve satisfying
\[
\int_{t_k}^{t_{k+1}} \bar L_i(\gamma_1(t_k),\dot\gamma^k(s),\bar{\mathbf{u}}(\gamma_1(t_k),t_k))\, ds
= \bar m_i(t_k,t_{k+1},\gamma_1(t_k),\gamma_1(t_{k+1});\gamma_1(t_k),\bar{\mathbf{u}}(\gamma_1(t_k),t_k)).
\]
Then,
\begin{equation}\label{2}
\bar m_i(t_k,t_{k+1},\gamma_1(t_k),\gamma_1(t_{k+1});\gamma_1(t_k),\bar{\mathbf{u}}(\gamma_1(t_k),t_k))
\geqslant
\int_{t_k}^{t_{k+1}} \bar L_i(\gamma^k(s),\dot\gamma^k(s),\bar{\mathbf{u}}(\gamma^k(s),s))\, ds-\bar E_k,
\end{equation}
where we set
\[
\bar E_k:=
\int_{t_k}^{t_{k+1}}\bigg[\textrm{Lip}(H)|\gamma^k(s)-\gamma_1(t_k)|+\Theta\Big(|\partial_t\bar{\mathbf{u}}|\cdot|s-t_k|+|D\bar{\mathbf{u}}|\cdot|\gamma^k(s)-\gamma_1(t_k)|\Big)\bigg]\, ds.
\]

By Lemmas \ref{lipu} and \ref{lipgm1}, we obtain
\[|E_k|\leqslant \int_{t_k}^{t_{k+1}}\Big(\textrm{Lip}(H)M_0(s-t_k)+\Theta CM_0(s-t_k)\Big)\, ds\leqslant K(t_{k+1}-t_k)^2,\]
where
\[K:=\frac{1}{2}(\textrm{Lip}(H)M_0+\Theta CM_0).\]
Note that $|\gamma^k(t_{k+1})-\gamma^k(t_k)|\leq M_0(t_{k+1}-t_k)$, 
which immediately implies 
the boundedness of $|\dot\gamma^k|$.
Similarly, there is $K>0$ such  that $\bar E_k\leqslant K(t_{k+1}-t_k)^2$.

Now, combining \eqref{1} and \eqref{2}, we obtain
\begin{align*}
u^1_i(x,t)&=\varphi_i(\gamma_1(0))+\sum_{k=0}^N \int_{t_k}^{t_{k+1}}L_i\Big(\gamma_1(s),\frac{\gamma_1(s)}{\varepsilon},\dot\gamma_1(s),\bar{\mathbf{u}}(\gamma_1(s),s)\Big)\, ds
\\ &\geqslant \varphi_i(\gamma_1(0))
+\sum_{k=0}^N
\int_{t_k}^{t_{k+1}} \bar L_i(\gamma^k(s),\dot\gamma^k(s),\bar{\mathbf{u}}(\gamma^k(s),s))\, ds
-\sum_{k=0}^N(E_k+\bar E_k+C\varepsilon)
\\ &\geqslant \bar u_i(x,t)-\sum_{k=0}^N(E_k+\bar E_k+C\varepsilon).
\end{align*}
Set $N:=\lfloor t/\sqrt{\varepsilon}\rfloor$ and $t_k=k\sqrt{\varepsilon}$ for $k=0,\dots,N$, where we denote by $\lfloor r\rfloor$ the greatest integer less than or equal to $r\in\R$.
If $t\in(0,\sqrt{\varepsilon})$, then we have $N=0$. Thus,
\[
\bigg|\sum_{k=0}^N(E_k+\bar E_k+C\varepsilon)\bigg|=|E_0+\bar E_0+C\varepsilon|\leqslant 2Kt^2+C\varepsilon\leqslant (2K+C)\varepsilon.\]
If $t>\sqrt{\varepsilon}$, then $N+1\le \frac{t}{\sqrt{\ep}}+1\leqslant \frac{2t}{\sqrt{\ep}}$. Thus,
\[
\bigg|\sum_{k=0}^N(E_k+\bar E_k+C\varepsilon)\bigg|\leqslant 2(N+1)K\varepsilon+(N+1)C\varepsilon\leqslant (4TK+2TC)\sqrt{\varepsilon},
\]
which completes the proof.
\end{proof}

\medskip
Next, we define a family of functions $\{\mathbf{u}^\ell\}_{\ell\in\N}$ by
\begin{equation}\label{func:ul}
u^{\ell+1}_i(x,t):=
\inf_{\gamma\in\cC(x;t)}\bigg\{\varphi_i(\gamma(0))+\int_0^t L_i\Big(\gamma(s),\frac{\gamma(s)}{\varepsilon},\dot\gamma(s),\mathbf{u}^\ell(\gamma(s),s)\Big)\, ds\bigg\}
\end{equation}
for $\ell\in\N$.
Let $\gamma_{\ell+1}:[0,t]\to\mathbb R^n$ be minimizers of \eqref{func:ul}.
By a standard theory of viscosity solutions, we see
that $\mathbf{u}^{\ell+1}$ is continuous on $\R^n\times[0,\infty)$, and is the solution of
the decoupled system
\begin{equation*}
  \left\{
   \begin{aligned}
   &\partial_t u_i(x,t)+H_i\Big(x,\frac{x}{\varepsilon},Du_i(x,t),\mathbf{u}^\ell(x,t)\Big)=0\quad \textrm{for} \ (x,t)\in\mathbb R^n\times(0,T), \ i\in\{1,\dots,m\},
   \\
   &u_i(x,0)=\varphi_i(x)\quad \text{for} \ x\in\R^n, \ i\in\{1,\dots,m\}.
   \\
   \end{aligned}
   \right.
\end{equation*}

\begin{lemma}\label{un}
For all $\ell\in\N$, we have
\begin{equation}\label{n}
  |\mathbf{u}^{\ell+1}(x,t)-\mathbf{u}^\ell(x,t)|\leqslant \frac{(\Theta t)^{\ell}}{\ell!}C\sqrt{\varepsilon}\quad \text{for} \ x\in\R^n, \ t\in(\sqrt{\ep},T).
\end{equation}
Moreover,
\begin{equation}\label{n'}
  |\mathbf{u}^{\ell+1}(x,t)-\mathbf{u}^\ell(x,t)|\leqslant \frac{(\Theta t)^{\ell}}{\ell!}C \varepsilon\quad
  \text{for} \ x\in\R^n, \ t\in(0,\sqrt{\varepsilon}).
\end{equation}
\end{lemma}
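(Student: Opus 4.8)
The plan is to prove \eqref{n} by induction on $\ell$, using Lemma \ref{u1} as the base case $\ell = 0$ (recalling that $\mathbf{u}^0 := \bar{\mathbf{u}}$ in the definition \eqref{func:ul}, so that $\mathbf{u}^{0+1} = \mathbf{u}^1$ and $\mathbf{u}^0 = \bar{\mathbf{u}}$). For the inductive step, assume \eqref{n} holds for $\ell - 1$, i.e. $|\mathbf{u}^\ell - \mathbf{u}^{\ell-1}| \leqslant \frac{(\Theta t)^{\ell-1}}{(\ell-1)!}C\sqrt{\varepsilon}$, and estimate $|\mathbf{u}^{\ell+1} - \mathbf{u}^\ell|$. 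Since $\mathbf{u}^{\ell+1}$ and $\mathbf{u}^\ell$ are given by the same optimal-control formula \eqref{func:ul} with the coupling slot filled by $\mathbf{u}^\ell$ and $\mathbf{u}^{\ell-1}$ respectively, I would fix a point $(x,t)$, let $\gamma_{\ell+1}$ be a minimizer for $u^{\ell+1}_i(x,t)$, and use it as a competitor in the formula for $u^\ell_i(x,t)$ to get
\begin{align*}
u^\ell_i(x,t) - u^{\ell+1}_i(x,t) &\leqslant \int_0^t \Big[ L_i\Big(\gamma_{\ell+1}(s),\tfrac{\gamma_{\ell+1}(s)}{\varepsilon},\dot\gamma_{\ell+1}(s),\mathbf{u}^{\ell-1}(\gamma_{\ell+1}(s),s)\Big) \\
&\qquad\qquad - L_i\Big(\gamma_{\ell+1}(s),\tfrac{\gamma_{\ell+1}(s)}{\varepsilon},\dot\gamma_{\ell+1}(s),\mathbf{u}^{\ell}(\gamma_{\ell+1}(s),s)\Big) \Big]\, ds.
\end{align*}
The key point is that (H5) transfers to a Lipschitz bound on $L_i$ in the $\mathbf{u}$-variable with the same constant $\Theta$: from $|H_i(x,y,p,\mathbf{u}^1) - H_i(x,y,p,\mathbf{u}^2)| \leqslant \Theta \max_j |u^1_j - u^2_j|$ one gets, by taking sup over $p$ in the Legendre transform, $|L_i(x,y,v,\mathbf{u}^1) - L_i(x,y,v,\mathbf{u}^2)| \leqslant \Theta \max_j|u^1_j - u^2_j|$ (on the set where the relevant $L_i$ are finite, which is guaranteed along the minimizing curve by the coercivity (H2)+(H5) and Lemma \ref{lipgm1}-type Lipschitz bounds on minimizers). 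Hence the integrand above is bounded by $\Theta \sup_{s \leqslant t} |\mathbf{u}^\ell(\cdot,s) - \mathbf{u}^{\ell-1}(\cdot,s)|_\infty$, and by the induction hypothesis this is $\leqslant \Theta \cdot \frac{(\Theta s)^{\ell-1}}{(\ell-1)!} C\sqrt{\varepsilon}$ for each $s$. Integrating,
\[
u^\ell_i(x,t) - u^{\ell+1}_i(x,t) \leqslant \int_0^t \Theta \frac{(\Theta s)^{\ell-1}}{(\ell-1)!} C\sqrt{\varepsilon}\, ds = \frac{(\Theta t)^\ell}{\ell!} C\sqrt{\varepsilon},
\]
and the reverse inequality follows by swapping the roles of $\mathbf{u}^\ell$ and $\mathbf{u}^{\ell+1}$ (using a minimizer for $u^\ell_i$ as competitor for $u^{\ell+1}_i$). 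This gives \eqref{n}. The bound \eqref{n'} for $t \in (0,\sqrt{\varepsilon})$ is obtained by the identical induction, replacing the base-case input from Lemma \ref{u1}: on $(0,\sqrt{\varepsilon})$ we have $|\mathbf{u}^1 - \bar{\mathbf{u}}| \leqslant C\varepsilon$, and the same integration of the Lipschitz-in-$\mathbf{u}$ estimate for $L_i$ propagates the factor $\varepsilon$ through all $\ell$ with the combinatorial weight $\frac{(\Theta t)^\ell}{\ell!}$.

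I expect the main obstacle to be the technical justification that the running cost $L_i$ is genuinely finite and Lipschitz in $\mathbf{u}$ \emph{along the curves that actually enter the comparison}, rather than merely on a fixed compact $\mathbf{u}$-set. This requires noting that the minimizers $\gamma_{\ell+1}$ of \eqref{func:ul} have uniformly bounded speed — exactly the analogue of Lemma \ref{lipgm1}, which holds uniformly in $\ell$ because the relevant a priori bounds (the $L^\infty$ bound on $\mathbf{u}^\ell$, which follows from comparison with $\boldsymbol{\varphi} \pm Ct$, and hence the coercivity of $H_i$ uniformly over the attained range of $\mathbf{u}^\ell$) do not degenerate with $\ell$. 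Once the admissible velocities and the attained values of $\mathbf{u}^\ell$ are confined to a fixed compact set, $L_i$ is bounded and Lipschitz there with a constant controlled by $\Theta$ (and the compact set), and the argument above goes through verbatim. A small additional bookkeeping point is that $\max_j|u^1_j - u^2_j| = |\mathbf{u}^1 - \mathbf{u}^2|$ in the sup-norm used throughout, so constants match cleanly; I would carry the sup-norm convention consistently so that the constant $\Theta$ in (H5) is exactly the one appearing in the exponential-type weight $(\Theta t)^\ell/\ell!$.
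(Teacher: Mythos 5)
Your proposal is correct and follows essentially the same route as the paper: induct on $\ell$ with Lemma \ref{u1} as the base case, compare $u^{\ell+1}_i$ and $u^\ell_i$ by plugging the minimizer of one into the control formula for the other, pass the $\mathbf{u}$-Lipschitz bound (H5) through the Legendre transform to $L_i$, and integrate the resulting $\Theta|\mathbf{u}^\ell-\mathbf{u}^{\ell-1}|$ bound to accumulate the $(\Theta t)^\ell/\ell!$ factor. The only cosmetic difference is the direction you treat first: you insert the minimizer $\gamma_{\ell+1}$ of $u^{\ell+1}_i$ as a competitor for $u^\ell_i$ to bound $u^\ell_i-u^{\ell+1}_i$, whereas the paper inserts the minimizer of the lower-indexed $u^{\ell}_i$ as a competitor for $u^{\ell+1}_i$; both get the opposite inequality by symmetry, exactly as you note, so this is the same argument.
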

\begin{proof}
We prove by induction. For $\ell=1$, we have
\begin{align*}
&u^2_i(x,t)-u^1_i(x,t)
\\ \leqslant & \int_0^t\bigg[L_i\Big(\gamma_1(s),\frac{\gamma_1(s)}{\varepsilon},\dot\gamma_1(s),\mathbf{u}^1(\gamma_1(s),s)\Big)-L_i\Big(\gamma_1(s),\frac{\gamma_1(s)}{\varepsilon},\dot\gamma_1(s),\bar{\mathbf{u}}(\gamma_1(s),s)\Big)\bigg]\, ds
\\ \leqslant &\int_0^t\Theta|\mathbf{u}^1(\gamma_1(s),s)-\bar{\mathbf{u}}(\gamma_1(s),s)|\, ds\leqslant \Theta tC\sqrt{\varepsilon}.
\end{align*}
Assume that \eqref{n} holds for $\ell=k$. For $\ell=k+1$, we have
\begin{align*}
&u^{k+2}_i(x,t)-u^{k+1}_i(x,t)
\\ \leqslant & \int_0^t\bigg[L_i\Big(\gamma_{k+1}(s),\frac{\gamma_{k+1}(s)}{\varepsilon},\dot\gamma_{k+1}(s),\mathbf{u}^{k+1}(\gamma_{k+1}(s),s)\Big)
\\ &-L_i\Big(\gamma_{k+1}(s),\frac{\gamma_{k+1}(s)}{\varepsilon},\dot\gamma_{k+1}(s),\mathbf{u}^k(\gamma_{k+1}(s),s)\Big)\bigg]\, ds
\\ \leqslant &\int_0^t\Theta|\mathbf{u}^{k+1}(\gamma_{k+1}(s),s)-\mathbf{u}^{k}(\gamma_{k+1}(s),s)|\, ds\leqslant C\sqrt{\varepsilon}\int_0^t\Theta\frac{(\Theta s)^{k}}{k!}\, ds
=\frac{(\Theta t)^{k+1}}{(k+1)!}C\sqrt{\varepsilon}.
\end{align*}
By symmetry, we can prove
\[|u^{k+2}_i(x,t)-u^{k+1}_i(x,t)|\leqslant \frac{(\Theta t)^{k+1}}{(k+1)!}C\sqrt{\varepsilon}.\]
Similarly we can prove \eqref{n'}.
\end{proof}

\begin{proof}[Proof of Theorem {\rm\ref{thm1}}]
By \eqref{n}, the sequence $\{\mathbf{u}^\ell\}_{\ell\in\N}$ is a Cauchy sequence in $C_b(\mathbb R^n\times[0,T])$, which implies that $\mathbf{u}^\ell\to \mathbf{u}^\infty$ uniformly on $\mathbb R^n\times[0,T]$ as $\ell\to\infty$. By the stability of viscosity solutions, $\mathbf{u}^\infty$ is the  solution $\mathbf{u}^\varepsilon$ of \eqref{E}. By Lemma \ref{un} we have
\begin{align*}
|\textbf{u}^\ell(x,t)-\bar{\textbf{u}}(x,t)|&\le\sum_{j=2}^{\ell}|\textbf{u}^j(x,t)-\textbf{u}^{j-1}(x,t)|+|\textbf{u}^1(x,t)-\bar{\textbf{u}}(x,t)|
\\ &\le \sum_{j=0}^{\ell-1}\frac{(\Theta t)^j}{j!}C\sqrt{\varepsilon}\le e^{\Theta T}C\sqrt{\varepsilon},
\end{align*}
which implies
\[
|\mathbf{u}^\varepsilon(x,t)-\bar{\mathbf{u}}(x,t)|\leqslant e^{\Theta T}C\sqrt{\varepsilon}
\ \text{for} \ x\in\R^n, \ t\in [\sqrt{\varepsilon},T],\]
and
\begin{equation}\label{eq:small-time}
|\mathbf{u}^\varepsilon(x,t)-\bar{\mathbf{u}}(x,t)|\leqslant e^{\Theta T}C \varepsilon
\ \text{for} \ x\in\R^n, \ t\in(0,\sqrt{\varepsilon}).
\end{equation}

By Lemma \ref{lipu}, we have
\[|\bar u_i(x,t)-\varphi_i(x)|\leqslant Ct.\]
Similarly, there is a constant $C'$ which depends on $\|\varphi_i\|_{W^{1,\infty}}$, $H_i$ and $T>0$ such that
\[|u^\varepsilon_i(x,t)-\varphi_i(x)|\leqslant C't.\]
One can conclude that
\begin{equation}\label{Ct}
  |u^\varepsilon_i(x,t)-\bar u_i(x,t)|\leqslant (C+C')t.
\end{equation}

Combining \eqref{eq:small-time} with \eqref{Ct}, we can get the conclusion.
\end{proof}


\medskip
\begin{proof}[Proof of Example {\rm\ref{ex}}]
Let $(u_1^\ep, u_2^\ep)$ be the solution to \eqref{ex-1}, and we have
\begin{align*}
&u^\varepsilon_1(0,t)=\inf_{\gamma\in\cC\left(0;\frac{t}{\ep}\right)}
\bigg\{\varepsilon\int_0^{t/\varepsilon}\bigg[\frac{1}{2}|\dot{\gamma}(s)|^2+V(\gamma(s))-F\big(u^\ep_1(\varepsilon\gamma(s),\ep s),u^\ep_2(\varepsilon\gamma(s),\ep s)\big)\bigg]\, ds\bigg\},\\
&u^\varepsilon_2(0,t)=\inf_{\gamma\in\cC\left(0;\frac{t}{\ep}\right)}\bigg\{\varepsilon\int_0^{t/\varepsilon}\bigg[\frac{1}{4}|\dot{\gamma}(s)|^2-F\big(u^\ep_1(\varepsilon\gamma(s),\ep s),u^\ep_2(\varepsilon\gamma(s),\ep s)\big)\bigg]\, ds\bigg\}.
\end{align*}
Let $\gamma:[0,t]\to\mathbb R$ be a minimizer of $u^\varepsilon_1(0,t)$.

If $\gamma_1([0,t/\varepsilon])\subset [-1/3,1/3]$, then
\[
u^\varepsilon_1(0,t)-u^\varepsilon_2(0,t)\geqslant \varepsilon\int_0^{t/\varepsilon}\bigg[\frac{1}{4}|\dot{\gamma}_1(s)|^2+V(\gamma_1(s))\bigg]\, ds
\geqslant \varepsilon\int_0^{t/\varepsilon}V(\gamma_1(s))\, ds\geqslant t.
\]
If not, without loss of generality, we assume that there exists $s_1\in(0,t/\varepsilon)$ such that $\gamma_1(s_1)=1/3$ and $\gamma((s_1,t/\varepsilon])\subset (-1/3,1/3)$. Then,
\begin{align*}
u^\varepsilon_1(0,t)-u^\varepsilon_2(0,t)&\geqslant \varepsilon\int_{s_1}^{t/\varepsilon}\bigg[\frac{1}{4}|\dot{\gamma}_1(s)|^2+V(\gamma_1(s))\bigg]\, ds
\\ &\geqslant \varepsilon\bigg(\frac{1}{4(t/\varepsilon-s_1)}\bigg|\int_{s_1}^{t/\varepsilon}\dot{\gamma}_1(s)\, ds\bigg|^2+\frac{t}{\varepsilon}-s_1\bigg)
\\ &=\varepsilon\bigg(\frac{1}{36(t/\varepsilon-s_1)}+\frac{t}{\varepsilon}-s_1\bigg)\geqslant \frac{\varepsilon}{3},
\end{align*}
where in the equality we used the fact that $\int_{s_1}^{t/\varepsilon}\dot{\gamma}_1(s)ds=\frac{1}{3}$. 
It is direct to see that
\[\frac{1}{2}(v'(y))^2-V(y)+F(\mathbf{0})=0,\quad (v'(y))^2+F(\mathbf{0})=0\]
have solutions. Therefore, the effective Hamiltonian $\bar H_i(x,0,\mathbf{0})=0$, which implies that
$\bar{\mathbf{u}}(x,t)=\mathbf{0}$ is the solution of \eqref{E0} with the initial condition $\varphi_1=\varphi_2\equiv0$.

Therefore, we finally conclude that
\begin{align*}
2\|\textbf{u}^\varepsilon-\bar{\textbf{u}}\|_{L^\infty}&\geq 2|\textbf{u}^\varepsilon(0,t)-\bar{\textbf{u}}(0,t)|=2|\textbf{u}^\varepsilon(0,t)|=2\max_{i\in\{1,\dots,m\}}|u^\varepsilon_i(0,t)|
\\ &\geq |u^\varepsilon_1(0,t)|+|u^\varepsilon_2(0,t)|\geq u^\varepsilon_1(0,t)-u^\varepsilon_2(0,t)\geq \min\{t,\varepsilon/3\}.
\end{align*} 
The proof is now complete.
\end{proof}

\section{Proof of Theorem \ref{thm2}}\label{sec:thm2}
In this section we prove Theorem \ref{thm2} by a similar argument to that of Section \ref{sec:thm1}, which needs to be carefully adjusted to be a stationary problem.
\textit{Throughout} this section, we assume $\lambda>\Theta$.

\begin{lemma}\label{cp}
Let $\mathbf{v}$ and $\mathbf{w}$ be a bounded lower semi-continuous supersolution and a bounded upper semi-continuous subsolution, respectively, of \eqref{hj0}. We have $w_i\leqslant v_i$ for all $i\in\{1,\dots,m\}$. Moreover, \eqref{hj0} has a unique solution $\bar{\mathbf{u}}$.
\end{lemma}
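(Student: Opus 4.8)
The plan is to reduce the stationary weakly coupled system \eqref{hj0} to a \emph{monotone} system by an exponential-type change of variables analogous to the one used for \eqref{E0'}, and then invoke the standard comparison principle for monotone systems. Concretely, since $\lambda>\Theta$, I would first establish the comparison principle directly: suppose $\mathbf{v}$ is a continuous supersolution and $\mathbf{w}$ a continuous subsolution of \eqref{hj0}, and suppose for contradiction that $\theta:=\sup_{i}\sup_{x\in\R^n}(w_i(x)-v_i(x))>0$. The coercivity from Lemma \ref{barH}(a) combined with (H5) (see the remark after (H2) in the introduction that $H_i$, hence $\bar H_i$, is uniformly coercive on compact $\mathbf{u}$-sets) forces $w_i-v_i\to 0$ as $|x|\to\infty$ along an appropriate normalization, so the supremum is attained (or nearly attained) at some finite point; one then doubles variables in the usual Ishii–Crandall–Lions fashion, producing a point $x_0$ and index $\ell$ with $w_\ell(x_0)-v_\ell(x_0)$ close to $\theta$. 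Testing the sub/supersolution inequalities at this point with a common test function gives
\[
\lambda w_\ell(x_0) + \bar H_\ell(x_0, p, \mathbf{w}(x_0)) \leqslant 0 \leqslant \lambda v_\ell(x_0) + \bar H_\ell(x_0, p, \mathbf{v}(x_0)),
\]
whence $\lambda\bigl(w_\ell(x_0)-v_\ell(x_0)\bigr) \leqslant \bar H_\ell(x_0,p,\mathbf{v}(x_0)) - \bar H_\ell(x_0,p,\mathbf{w}(x_0)) \leqslant \Theta|\mathbf{v}(x_0)-\mathbf{w}(x_0)| \leqslant \Theta\theta$ using Lemma \ref{barH}(d), so that $\lambda\theta \leqslant \Theta\theta$, contradicting $\lambda>\Theta$ and $\theta>0$. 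Hence $w_i\leqslant v_i$ for all $i$.

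For existence and uniqueness of a solution $\bar{\mathbf{u}}$, uniqueness is immediate from the comparison principle just proved (any two solutions are mutually sub/super, so they coincide). For existence, I would apply Perron's method: exhibit a bounded subsolution and a bounded supersolution of \eqref{hj0} — e.g. constants $\pm C/\lambda$ with $C$ large enough, using that $\bar H_\ell(x,0,\mathbf{0})$ is bounded by Lemma \ref{barH}(c) and the Lipschitz-in-$\mathbf{u}$ bound (H5)/Lemma \ref{barH}(d) absorbs the constant-vector perturbation because $\lambda>\Theta$ — and then take the supremum over all subsolutions lying between them. The standard Perron argument for monotone systems (see \cite{IK,CLL}) shows this sup is a solution; the comparison principle guarantees it is the unique one. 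Boundedness and uniform continuity of $\bar{\mathbf{u}}$ follow from the sandwiching between the barriers together with (H4)/Lemma \ref{barH}(c).

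The main obstacle I anticipate is the coercivity/compactness step needed to ensure the supremum $\theta$ is essentially attained at a finite point on the unbounded domain $\R^n$: unlike the time-dependent case, there is no finite time horizon to localize, so one must genuinely use the uniform coercivity of $\bar H_i$ (guaranteed on compact $\mathbf{u}$-ranges by (H2)+(H5)) to control the gradients and prevent escape to infinity — a standard but slightly delicate point. Everything else is routine once the system is recognized as strictly monotone by virtue of $\lambda>\Theta$. An alternative, cleaner route that sidesteps the comparison argument's bookkeeping is to mimic the proof of the existence lemma for \eqref{E0}: set $\tilde H_i(x,p,\mathbf{u}) := \lambda u_i + \bar H_i(x,p,\mathbf{u})$, observe that the map $\mathbf{u}\mapsto \tilde{\mathbf H}(x,p,\mathbf{u})$ is strictly monotone since $\lambda>\Theta$ (the same computation as in the proof after \eqref{E0'}), and quote the well-posedness theory for monotone stationary weakly coupled systems from \cite{IK,CLL} verbatim; I would present this shorter version in the paper and relegate the direct comparison computation above to a remark.
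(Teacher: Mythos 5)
Your preferred ``alternative, cleaner route'' at the end is exactly the paper's proof: the paper simply verifies that $\lambda\mathbf{u}+\bar{\mathbf{H}}$ is strictly monotone using Lemma \ref{barH}(d) and $\lambda>\Theta$, then quotes \cite{IK} for the comparison principle, uniqueness, and existence for monotone systems. The longer doubling-of-variables sketch is unnecessary (and, as you note, somewhat delicate to make rigorous on the unbounded domain $\R^n$); the paper wisely omits it in favor of the monotonicity check alone.
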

\begin{proof}
According to \cite{IK}, we only need to check that $\lambda \mathbf{u}+\bar{\mathbf{H}}$ is strictly monotone. Assume that $u_\ell-v_\ell=\max_{i\in1,\ldots,m}|u_i-v_i|$.
We have
\[\lambda u_\ell+\bar H_\ell(x,p,\mathbf{u})-\lambda v_\ell-\bar H_\ell(x,p,\mathbf{v})
\geqslant \lambda(u_\ell-v_\ell)-\Theta |\mathbf{u}-\mathbf{v}|=(\lambda-\Theta)(u_\ell-v_\ell),\]
which implies that $\lambda \mathbf{u}+\bar{\mathbf{H}}$ is strictly monotone when $\lambda>\Theta$.
\end{proof}

\begin{lemma}\label{bd}
Let
\[
M:=\frac{\lambda}{\lambda-\Theta}\sup_{x\in\mathbb R^n,y\in\T^n}|\mathbf{H}(x,y,0,\mathbf{0})|, \quad
\bar{M}:=\frac{\lambda}{\lambda-\Theta}\sup_{x\in\mathbb R^n}|\bar{\mathbf{H}}(x,0,\mathbf{0})|.
\]
Then,
$\|\mathbf{u}^\ep\|_{L^\infty}\le M/\lambda$ for all $\ep>0$, and
$\|\bar{\mathbf{u}}\|_{L^\infty}\leqslant \bar{M}/\lambda$.
Here, $M\geqslant \bar M$. Moreover, there is a constant $C>0$ which depends only on
$M$, $\bar{M}$, and $\lambda$ such that
\begin{equation}\label{Du(x)}
\|Du_i^\ep\|_{\Li}+  \|D\bar u_i\|_{\Li}\leqslant C
\quad\text{for all} \ i\in\{1,\ldots, m\}.
\end{equation}
\end{lemma}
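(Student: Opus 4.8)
The goal is a uniform sup-bound on $\mathbf{u}^\ep$ and $\bar{\mathbf{u}}$ together with uniform gradient bounds, all quantitative in $M$, $\bar M$, $\lambda$. I would treat the three assertions in the stated order, since each feeds the next.

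\textit{Step 1: the $L^\infty$ bounds.} For the sup-bound on $\mathbf{u}^\ep$, I would build explicit constant sub/supersolutions. Fix $i$ with $u^\ep_i(x_0)=\max_j\|u^\ep_j\|_\infty$ (a maximizing index after possibly translating sign); using (H5) to compare $H_i(x,x/\ep,0,\mathbf{u}^\ep)$ with $H_i(x,x/\ep,0,\mathbf{0})$ one gets at the maximum point, via the viscosity test against the constant $\|\mathbf{u}^\ep\|_\infty$, an inequality of the form $\lambda\|\mathbf{u}^\ep\|_\infty\le \sup_{x,y}|\mathbf{H}(x,y,0,\mathbf{0})|+\Theta\|\mathbf{u}^\ep\|_\infty$, which rearranges to $\|\mathbf{u}^\ep\|_\infty\le M/\lambda$ since $\lambda>\Theta$. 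Symmetrically for the minimum. The same argument applied to $\bar{\mathbf{H}}$, using Lemma \ref{barH}(d) in place of (H5), gives $\|\bar{\mathbf{u}}\|_\infty\le\bar M/\lambda$. The inequality $M\ge\bar M$ then follows from the elementary observation that $\bar H_i(x,0,\mathbf{0})$, being the ergodic constant of $y\mapsto H_i(x,y,Dv(y),\mathbf{0})$, lies between $\min_y H_i(x,y,\cdot)$ and $\max_y H_i(x,y,\cdot)$ at the relevant gradient, so $\sup_x|\bar{\mathbf{H}}(x,0,\mathbf{0})|\le\sup_{x,y}|\mathbf{H}(x,y,0,\mathbf{0})|$; alternatively, compare the ergodic problem for $\bar H_i(x,0,\mathbf{0})$ directly with the constants $\pm\sup_{x,y}|H_i(x,y,0,\mathbf{0})|$.

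\textit{Step 2: the gradient bounds.} With the solutions now known to be uniformly bounded, coercivity becomes usable. By (H2) combined with (H5) — exactly the ``uniformly coercive on any compact set of $\mathbf{u}$'' remark recorded after the hypotheses — there is $R=R(M/\lambda)$ such that $\inf_{x,y}H_i(x,y,p,\mathbf{u})>\max_i(M+\Theta M/\lambda)/\text{(something)}$ whenever $|p|\ge R$ and $|\mathbf{u}|\le M/\lambda$; more precisely, $|p|\ge R$ forces $H_i(x,x/\ep,p,\mathbf{u}^\ep(x))$ to exceed $-\lambda u^\ep_i(x)$, contradicting the equation. The standard way to convert this into a Lipschitz bound is the doubling-of-variables / Ishii–Lions argument: set $\Phi(x,z)=u^\ep_i(x)-u^\ep_i(z)-L|x-z|$, show that for $L$ large (depending only on $R$, $M$, $\lambda$, and the modulus in (H1)) the supremum over $x\ne z$ is $\le 0$, using the coercivity to kill the Hamiltonian terms at an interior maximum. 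One must be slightly careful that the coupling term only contributes $\Theta|u^\ep_i(x)-u^\ep_i(z)|\le\Theta L|x-z|$-type contributions that are absorbed, and that (H4) supplies the needed $x$-Lipschitz control of $H_i$. The same argument applies verbatim to $\bar{\mathbf{u}}$ using Lemma \ref{barH}(a),(c),(d). This yields the common constant $C$ in \eqref{Du(x)}.

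\textit{Main obstacle.} The delicate point is the gradient estimate: one needs it to be \emph{uniform in $\ep$}, so the Ishii–Lions doubling must be carried out on the $\ep$-problem with constants that do not see the fast variable $x/\ep$. This works precisely because (H4) bounds the $x_1$-Lipschitz constant uniformly in $y=x/\ep$, and (H2) is a coercivity that is uniform in $y$; so the oscillation of the fast variable never enters the estimate. Care is also needed because the system is coupled: the doubling has to be done for a fixed $i$ while treating $\mathbf{u}^\ep(x)$ as a given bounded datum, and the $\Theta$-Lipschitz dependence (H5) must be shown to be dominated by the coercive gain. I would organize the proof so that this is transparent, and note that the corresponding estimate for $\bar{\mathbf{u}}$ is even easier since $\bar H_i$ no longer carries the fast variable.
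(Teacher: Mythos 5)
Your Step 1 ($L^\infty$ bound via constant sub/supersolutions) and the $M\geqslant\bar M$ argument (compare the discounted cell problem with constants) are essentially the same as the paper's, and both are correct.

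The gradient bound is where you go astray. You propose an Ishii--Lions-type doubling with $\Phi(x,z)=u^\ep_i(x)-u^\ep_i(z)-L|x-z|$, and you say that (H4) ``supplies the needed $x$-Lipschitz control'' while the coupling term contributes $\Theta L|x-z|$-type quantities to be absorbed. This means you are subtracting the viscosity inequalities at the two doubled points $\bar x$ and $\bar z$. But that subtraction produces the difference $H_i(\bar x,\bar x/\ep,p,\cdot)-H_i(\bar z,\bar z/\ep,p,\cdot)$, in which \emph{both} the slow variable ($\bar x$ vs.\ $\bar z$, handled by (H4)) and the fast variable ($\bar x/\ep$ vs.\ $\bar z/\ep$) change. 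The hypotheses give only $\mathbf{H}\in\mathrm{BUC}$ in $y$, with no Lipschitz or quantitative modulus in $y$; and on $\mathbb T^n$ the two fast points need not be close when $\ep$ is small. So the resulting error term is a fixed positive constant (the $y$-oscillation of $H_i$), not an $O(|x-z|)$ quantity, and it cannot be absorbed by $\lambda L|x-z|$. Your remark that ``the oscillation of the fast variable never enters'' is the right intuition but is inconsistent with the subtraction you describe: the subtraction is precisely what brings it in.

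The fix is to not subtract at all, which is what the paper does. If $p\in D^+u^\ep_i(x)$, the subsolution inequality gives $H_i(x,x/\ep,p,\mathbf{u}^\ep(x))\leqslant -\lambda u^\ep_i(x)\leqslant M$, hence by (H5), $H_i(x,x/\ep,p,\mathbf{0})\leqslant M+\Theta M/\lambda$, and (H2) (coercivity uniform in $y$) bounds $|p|$. The paper phrases this as the a.e.\ version:
\[
\bar H_i(x,D\bar u_i(x),\mathbf 0)\leqslant \bar H_i(x,D\bar u_i(x),\bar{\mathbf u}(x))+\Theta|\bar{\mathbf u}(x)|=-\lambda\bar u_i(x)+\Theta|\bar{\mathbf u}(x)|\leqslant\tfrac{\lambda+\Theta}{\lambda}\bar M
\]
and analogously for $u^\ep_i$. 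Note (H4) is never used in this lemma; the only ingredients are the $L^\infty$ bound, (H5) (or Lemma~\ref{barH}(d)), and (H2) (or Lemma~\ref{barH}(a)). This one-point argument has no fast-variable issue and gives the uniform-in-$\ep$ constant directly.
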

\begin{proof}
We only prove for $\bar{\mathbf{u}}$. 
A direct calculation shows that
\[
\lambda \frac{\bar M}{\lambda}+\bar H_i(x,0,\frac{\mathbf{\bar M}}{\lambda})\geqslant \bar M+\bar H_i(x,0,0)-\Theta \frac{\bar M}{\lambda}=\sup_{x\in\mathbb R^n}|\bar{\mathbf{H}}(x,0,\mathbf{0})|-\bar H_i(x,0,0)\geqslant 0,
\]
where we set $\bar{\mathbf{M}}:=(\bar M,\ldots,\bar M)$.
Thus, $\bar{\mathbf{M}}/\lambda$ is a supersolution of \eqref{hj0}. Similarly, $-\bar{\mathbf{M}}/\lambda$ is a subsolution of \eqref{hj0}. Then Lemma \ref{cp} implies that $|\bar{\mathbf{u}}(x)|\leqslant \bar M/\lambda$.

Now we prove that $M\geqslant \bar M$. Consider
\[
\delta w^{\delta}_i+H_i(x,y,Dw^{\delta}_i,\textbf{0})=0\quad \textrm{for}\ y\in\mathbb T^n, \ i\in\{1,\dots,m\}.
\]
It is direct to check that $M/\delta$ (resp., $-M/\delta$) is a supersolution (resp., subsolution) of the above equation. By the comparison principle, we have 
$|\delta w^\delta_i|\leqslant \sup_{x\in\mathbb R^n,\ y\in\mathbb T^n}|\textbf{H}(x,y,0,\textbf{0})|$, which implies that $M\geqslant \bar M$. Here we recall that $\delta w^\delta_i \to -\bar H_i(x,0,\textbf{0})$ as $\delta\to 0$. 

A direct calculation shows that
\[\bar{H}_i(x,D\bar u_i(x),0)\leqslant \bar H_i(x,D\bar u_i(x),\bar{\mathbf{u}})+\Theta |\bar{\mathbf{u}}|=-\lambda \bar u_i(x)+\Theta |\bar{\mathbf{u}}|\leqslant \frac{\lambda+\Theta}{\lambda}\bar M\]
holds almost everywhere. By (H2) we get \eqref{Du(x)}.
\end{proof}

Since $\bar u_i(x)$ is a solution to
\[
\lambda u(x)+\bar H_i(x,Du(x),\bar{\mathbf{u}}(x))=0,
\]
we have the following implicit representation formula.
\begin{lemma}\label{M2}
We have
\[\bar u_i(x)=\inf_{\gamma\in\cC(x)}\int_{-\infty}^0e^{\lambda s}\bar L_i(\gamma(s),\dot\gamma(s),\bar{\mathbf{u}}(\gamma(s)))\, ds,
\]
where we denote by
$\cC(x)$
the set of all trajectories $\gamma\in \AC((-\infty,0])$
such that $\gamma(0)=x$.
Moreover, the infimum is achieved. Let $\bar {\gamma}:(-\infty,0]\to \mathbb R^n$ be a minimizer, then there is a constant $M_0$ depending only on $\bar{\mathbf{H}}$ and $\lambda$ such that $\|\dot{\bar{\gamma}}\|_{\Li}\leqslant M_0$.
\end{lemma}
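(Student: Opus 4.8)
The plan is to combine the standard optimal-control representation for discounted Hamilton--Jacobi equations with a comparison argument, exactly as in the time-dependent case but now on the half-line $(-\infty,0]$. First I would note that since $\bar u_i$ solves $\lambda u + \bar H_i(x,Du,\bar{\mathbf u}(x))=0$ and, by Lemma \ref{barH}(a)--(c) together with Lemma \ref{bd}, the Hamiltonian $p\mapsto \bar H_i(x,p,\bar{\mathbf u}(x))$ is convex and coercive with the requisite regularity in $x$, one can freeze the (already-known, Lipschitz by Lemma \ref{bd}) function $\bar{\mathbf u}$ inside the coupling and treat the equation as a scalar discounted equation with effective Lagrangian $\bar L_i(x,v,\bar{\mathbf u}(x)) = \sup_{p}(v\cdot p - \bar H_i(x,p,\bar{\mathbf u}(x)))$. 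The classical Lax--Oleinik / dynamic programming representation for such equations on $\R^n$ gives
\[
\bar u_i(x) = \inf_{\gamma\in\cC(x)} \int_{-\infty}^0 e^{\lambda s}\,\bar L_i\big(\gamma(s),\dot\gamma(s),\bar{\mathbf u}(\gamma(s))\big)\,ds,
\]
the discount factor $e^{\lambda s}$ ($s\le 0$) guaranteeing convergence of the integral since $\bar L_i$ grows at most linearly along curves with controlled speed and $\bar u_i$ is bounded by Lemma \ref{bd}. The rigorous justification is by the usual two inequalities: $(\le)$ follows by taking any admissible $\gamma$, using the sub-solution property along $\gamma$ and integrating $\frac{d}{ds}\big(e^{\lambda s}\bar u_i(\gamma(s))\big)$; $(\ge)$ follows by constructing a nearly optimal trajectory via the super-solution property, or by invoking the comparison principle (Lemma \ref{cp}) against the value function on the right-hand side.

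Next I would establish that the infimum is attained. Truncate: for each $T>0$ consider the finite-horizon problem on $[-T,0]$ with terminal cost $e^{-\lambda T}\bar u_i(\gamma(-T))$; a minimizer exists by Tonelli's theorem since $\bar L_i$ is convex and superlinear in $v$ (superlinearity of $\bar L_i$ comes from coercivity of $\bar H_i$, Lemma \ref{barH}(a), uniformly on the compact range of $\bar{\mathbf u}$). A priori the minimizing speeds are bounded uniformly in $T$ — this is the content of the last assertion and I address it below — so one extracts a subsequence converging locally uniformly on $(-\infty,0]$ to a limiting curve $\bar\gamma\in\cC(x)$, and lower semicontinuity of the action plus the dynamic programming principle shows $\bar\gamma$ is a genuine minimizer of the infinite-horizon problem.

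Finally, the Lipschitz bound $\|\dot{\bar\gamma}\|_{\Li}\le M_0$. The mechanism is the same as in Lemma \ref{lipgm}: since $\bar u_i$ is globally Lipschitz with constant $C$ (Lemma \ref{bd}) and the discount is bounded below on any fixed window, the optimal action over a window $[s,s']$ of length comparable to $1$ is controlled, $\int_s^{s'} e^{\lambda \tau}\bar L_i(\bar\gamma,\dot{\bar\gamma},\bar{\mathbf u})\,d\tau \le e^{\lambda s'}(\bar u_i(\bar\gamma(s')) - \bar u_i(\bar\gamma(s))) \le C|\bar\gamma(s')-\bar\gamma(s)|$; combined with the superlinear lower bound $\bar L_i(x,v,\bar{\mathbf u}) \ge \theta(|v|) - C_0$ coming from coercivity, Jensen's inequality forces an a priori bound on the average speed, and then bootstrapping on small windows (using the boundedness of the discount factor over bounded time intervals and the Euler--Lagrange/monotonicity structure, as in the standard weak KAM argument) upgrades this to the pointwise bound $\|\dot{\bar\gamma}\|_{\Li}\le M_0$ with $M_0$ depending only on $\bar{\mathbf H}$ and $\lambda$ through $C$, $C_0$, and $\theta$. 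The main obstacle is this last speed estimate on the unbounded interval: one must ensure the bound is genuinely uniform in the (a priori unknown) location of the window and does not degrade as $s\to-\infty$; the discount factor helps (it makes the tail contribution small) but one has to phrase the argument so the Lipschitz constant of $\bar u_i$, not the values themselves, does the work, which is why Lemma \ref{bd} is invoked rather than mere boundedness.
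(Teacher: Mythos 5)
Your proposal gives the standard optimal-control justification for the discounted representation formula, which is exactly what the paper implicitly invokes: the paper states Lemma \ref{M2} without proof, as a standard consequence of $\bar u_i$ solving the decoupled scalar equation $\lambda u+\bar H_i(x,Du,\bar{\mathbf u}(x))=0$ with $\bar{\mathbf u}$ frozen, $\bar H_i$ convex, coercive, and Lipschitz in $x$, and $\bar u_i$ bounded and Lipschitz by Lemma \ref{bd}. So your route --- representation by two comparison inequalities, Tonelli plus truncation for attainment, and coercivity plus the Lipschitz estimate of Lemma \ref{bd} for the uniform speed bound --- is the same one the paper is relying on, and the subtlety you flag (the uniformity of the speed bound as $s\to-\infty$, handled via the Lipschitz constant and the boundedness of $\bar u_i$ rather than its pointwise values) is precisely the point that makes this a lemma worth stating.

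One small algebraic slip to repair: the dynamic programming identity along the minimizer reads
\[
\int_s^{s'} e^{\lambda\tau}\,\bar L_i\big(\bar\gamma(\tau),\dot{\bar\gamma}(\tau),\bar{\mathbf u}(\bar\gamma(\tau))\big)\,d\tau
= e^{\lambda s'}\bar u_i(\bar\gamma(s')) - e^{\lambda s}\bar u_i(\bar\gamma(s)),
\]
with the discount factors attached to each value separately, not
$e^{\lambda s'}\big(\bar u_i(\bar\gamma(s'))-\bar u_i(\bar\gamma(s))\big)$ as you wrote; the difference is the term $(e^{\lambda s'}-e^{\lambda s})\,\bar u_i(\bar\gamma(s))$, which does not have a fixed sign. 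This is easily absorbed: on a window of length at most $1$ the extra term is bounded by $(e^{\lambda(s'-s)}-1)\,e^{\lambda s}\|\bar u_i\|_\infty \le \lambda e^{\lambda}|s'-s|\,\|\bar u_i\|_\infty$, and $\|\bar u_i\|_\infty\le \bar M/\lambda$ by Lemma \ref{bd}, so it contributes only an $O(|s'-s|)$ error that feeds harmlessly into the Jensen/superlinearity step. With that correction your derivation of $\|\dot{\bar\gamma}\|_{L^\infty}\le M_0$ goes through as intended, with $M_0$ depending only on $\bar{\mathbf H}$ and $\lambda$ through the coercivity modulus, $\Lip(\bar u_i)$, and $\|\bar u_i\|_\infty$.
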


In a similar way to that of \eqref{def:u1} we define the functions $u^1_i:\R^n\to\R$ by
\[
u^1_i(x):=\inf_{\gamma\in\cC(x)}\int_{-\infty}^0e^{\lambda s}L_i\Big(\gamma(s),\frac{\gamma(s)}{\varepsilon},\dot\gamma(s),\bar{\mathbf{u}}(\gamma(s))\Big)\, ds
\quad\text{for} \ i\in\{1,\ldots,m\}.
\]
It is standard to see that $u^1_i$ is the solution of
\[\lambda u(x)+H_i\Big(x,\frac{x}{\varepsilon},Du(x),\bar{\mathbf{u}}(x)\Big)=0\quad \textrm{for}\ x\in \mathbb R^n.\]
Let $\gamma_1:(-\infty,0]\to \mathbb R^n$ be a minimizer curve of $u^1_i(x)$. Then, there is a constant $M_0$ depending only on $\mathbf{H}$ and $\lambda$ such that $\|\dot{\gamma}_1\|_{\Li}\leqslant M_0$.

\begin{lemma}
For each $\varepsilon\in (0,\frac{1}{\lambda^2})$, there is a constant $C>0$ depending only on $n$, $\mathbf{H}$ and $\lambda$ such that
\[
|\mathbf{u}^1(x)-\bar{\mathbf{u}}(x)|
\leqslant C\sqrt{\varepsilon}\quad \text{for} \ x\in\R^n.
\]
\end{lemma}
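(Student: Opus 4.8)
The plan is to follow the scheme of the proof of Lemma~\ref{u1}, replacing the finite horizon $[0,t]$ by the half-line $(-\infty,0]$ and carrying the discount weight $e^{\lambda s}$ through every step. It suffices to prove, for each fixed $i\in\{1,\dots,m\}$ and $x\in\R^n$, that $u^1_i(x)-\bar u_i(x)\geqslant -C\sqrt\varepsilon$; the reverse inequality follows by the symmetric argument, started from a minimizer $\bar\gamma\in\cC(x)$ of $\bar u_i(x)$, which exists with $\|\dot{\bar\gamma}\|_{\Li}\leqslant M_0$ by Lemma~\ref{M2}.

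Fix a minimizer $\gamma_1\in\cC(x)$ of $u^1_i(x)$, so $\|\dot\gamma_1\|_{\Li}\leqslant M_0$, and set $t_k:=-k\sqrt\varepsilon$ for $k\in\N\cup\{0\}$. On each slab $[t_{k+1},t_k]$ (length $\sqrt\varepsilon$, so $|\gamma_1(t_k)-\gamma_1(t_{k+1})|\leqslant M_0\sqrt\varepsilon$) I would perform four reductions, each costing at most $Ce^{\lambda t_k}\varepsilon$ with $C=C(n,\mathbf H,\lambda)$: (i) replace $e^{\lambda s}$ by the constant $e^{\lambda t_k}$, using $0\leqslant e^{\lambda t_k}-e^{\lambda s}\leqslant \lambda\sqrt\varepsilon\,e^{\lambda t_k}$ on the slab together with the boundedness of $L_i$ along $\gamma_1$ (from $\|\dot\gamma_1\|_{\Li}\leqslant M_0$, the bound on $\bar{\mathbf u}$ in Lemma~\ref{bd}, and the coercivity of $H_i$ on compact sets of $\mathbf u$); (ii) freeze the slow and coupling variables at $\gamma_1(t_k)$ and $\bar{\mathbf u}(\gamma_1(t_k))$, with error controlled as the term $E_k$ in \eqref{1} via {\rm(H4)}, Lemma~\ref{barH}(d), Lemma~\ref{bd} and the speed bound (there is now no $|\partial_t\bar{\mathbf u}|$ contribution, $\bar{\mathbf u}$ being stationary); (iii) bound the resulting frozen integral below by $m^\varepsilon\big(t_{k+1},t_k,\gamma_1(t_{k+1}),\gamma_1(t_k);\gamma_1(t_k),\bar{\mathbf u}(\gamma_1(t_k))\big)$ and invoke Lemma~\ref{lem:HJ} to pass to $\bar m$ up to $C\varepsilon$; (iv) take $\gamma^k$ a minimizer of this $\bar m$ — a straight line, by convexity of $v\mapsto\bar L_i$, hence $\|\dot\gamma^k\|_{\Li}\leqslant M_0$ — unfreeze the slow and coupling variables (error of the type $\bar E_k$ in \eqref{2}), and restore the weight $e^{\lambda s}$. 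This yields, on each slab,
\[
\int_{t_{k+1}}^{t_k}e^{\lambda s}L_i\Big(\gamma_1(s),\tfrac{\gamma_1(s)}{\varepsilon},\dot\gamma_1(s),\bar{\mathbf u}(\gamma_1(s))\Big)\,ds\;\geqslant\;\int_{t_{k+1}}^{t_k}e^{\lambda s}\bar L_i\big(\gamma^k(s),\dot\gamma^k(s),\bar{\mathbf u}(\gamma^k(s))\big)\,ds-\mathcal E_k,
\]
with $\mathcal E_k\leqslant Ce^{\lambda t_k}\varepsilon$.

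Since $\gamma^k(t_k)=\gamma_1(t_k)=\gamma^{k-1}(t_k)$, the $\gamma^k$ concatenate into a single $\gamma\in\cC(x)$, which is absolutely continuous on $(-\infty,0]$ with $\|\dot\gamma\|_{\Li}\leqslant M_0$ and $\gamma(0)=x$. Summing the slab estimates over $k\geqslant0$ and using the representation of $\bar u_i(x)$ in Lemma~\ref{M2},
\[
u^1_i(x)\;\geqslant\;\int_{-\infty}^0 e^{\lambda s}\bar L_i\big(\gamma(s),\dot\gamma(s),\bar{\mathbf u}(\gamma(s))\big)\,ds-\sum_{k\geqslant0}\mathcal E_k\;\geqslant\;\bar u_i(x)-C\varepsilon\sum_{k\geqslant0}e^{-\lambda k\sqrt\varepsilon}.
\]
Here the hypothesis $\varepsilon\in(0,1/\lambda^2)$ enters decisively: it gives $\lambda\sqrt\varepsilon<1$, hence $1-e^{-\lambda\sqrt\varepsilon}\geqslant\tfrac12\lambda\sqrt\varepsilon$, so $\sum_{k\geqslant0}e^{-\lambda k\sqrt\varepsilon}=(1-e^{-\lambda\sqrt\varepsilon})^{-1}\leqslant 2/(\lambda\sqrt\varepsilon)$ and $\sum_k\mathcal E_k\leqslant (2C/\lambda)\sqrt\varepsilon$. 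Thus $u^1_i(x)\geqslant\bar u_i(x)-C'\sqrt\varepsilon$, and the symmetric argument finishes the proof.

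The main obstacle is the bookkeeping of the per-slab errors against the discount weights: each of the four reductions costs $O(\varepsilon)$ on a slab of length $\sqrt\varepsilon$ — the slab length being taken equal to $\sqrt\varepsilon$ precisely so that the discount-freezing step (i) also costs only $O(e^{\lambda t_k}\varepsilon)$ — and summing against the weights $e^{\lambda t_k}$ produces $O(\varepsilon)\cdot O(1/(\lambda\sqrt\varepsilon))=O(\sqrt\varepsilon)$, which is the claimed rate and the best one gets this way; this is also why the restriction $\varepsilon<1/\lambda^2$ is imposed, so that $1-e^{-\lambda\sqrt\varepsilon}$ is comparable to $\lambda\sqrt\varepsilon$ rather than much smaller. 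A secondary point deserving care is that the infinite concatenation $\gamma$ is genuinely admissible in $\cC(x)$, which is ensured by the uniform speed bound $\|\dot\gamma^k\|_{\Li}\leqslant M_0$ on all slabs.
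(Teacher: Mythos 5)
Your proof is correct in outline, but it takes a genuinely different route from the paper's, and the difference is instructive. The paper's central device is the normalization $\mathbf{H}^M=\mathbf{H}-\mathbf{M}$ made possible by (H6): this makes $L_i$, $\bar L_i$ and $\bar u_i$ all nonnegative. With nonnegativity in hand, the paper never pays an additive error for the discount weight: on each slab it bounds $\int e^{\lambda s}L_i\,ds\geqslant e^{-\lambda t_{k+1}}\int L_i\,ds$ and, after passing to $\bar m$, restores the weight multiplicatively, $e^{-\lambda t_{k+1}}\int\bar L_i\,ds\geqslant e^{-\lambda(t_{k+1}-t_k)}\int e^{\lambda s}\bar L_i\,ds$, accumulating a prefactor $e^{-\lambda\sqrt\varepsilon}$ in front of $\bar u_i(x)$ at the end; this prefactor is then absorbed using $0\leqslant\bar u_i\leqslant M/\lambda$. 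You instead freeze and unfreeze the discount weight additively (your steps (i) and (iv)), paying $O(e^{\lambda t_k}\varepsilon)$ per slab, and you thereby never invoke (H6) or the shift at all — you arrive directly at $u^1_i\geqslant\bar u_i-C\sqrt\varepsilon$ with no multiplicative factor to deal with. This is more transparent and even suggests (H6) is inessential for this lemma, but it has a cost: your step (i) needs pointwise boundedness of $L_i$ along $\gamma_1$ (and of $\bar L_i$ along $\gamma^k$ for step (iv)), a nontrivial fact that you assert from the speed bound and Lemma \ref{bd} but do not prove; it requires a bit more than (H2) alone — one needs the minimizer's speed to lie strictly inside the effective domain of $L_i$, which is standard (it follows from the Lipschitz bound on $u^1_i$ and the Fenchel/calibration identity along optimal trajectories) but should be spelled out. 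The paper's normalization precisely sidesteps that boundedness claim, at the price of assuming (H6). Both proofs then use the same ingredients — Lemma \ref{lem:HJ}, the $\sqrt\varepsilon$-slab partition, the speed bound $M_0$, convexity of $\bar L_i$ to pick straight-line $\gamma^k$, and the geometric series $\sum_k e^{-\lambda k\sqrt\varepsilon}=(1-e^{-\lambda\sqrt\varepsilon})^{-1}=O(1/(\lambda\sqrt\varepsilon))$ under $\varepsilon<1/\lambda^2$ — and reach the same $O(\sqrt\varepsilon)$ rate.
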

\begin{proof}
Let $M$ be the constant defined in Lemma \ref{bd}, and set $\mathbf{H}^M:=\mathbf{H}-\mathbf{M}$, where we set $\mathbf{M}=(M,\ldots,M)$.
Let $\bar{\mathbf{H}}^M$ be the effective Hamiltonian for $\mathbf{H}^M$, which coincides with $\bar{\mathbf{H}}-\mathbf{M}$. By (H6), $\mathbf{u}^1_M:=\mathbf{u}^1+\frac{\mathbf{M}}{\lambda}$ and
$\bar{\mathbf{u}}_M:=\bar{\mathbf{u}}+\frac{\mathbf{M}}{\lambda}$ are, respectively, the solutions of
\[
\lambda u(x)+H^M_i\Big(x,\frac{x}{\varepsilon},Du(x),\bar{\mathbf{u}}_M(x)\Big)=0\quad \text{for}\ x\in\mathbb R^n,\ i\in\{1,\ldots,m\},
\]
and \eqref{hj0} with $\bar{\mathbf{H}}$ replaced by $\bar{\mathbf{H}}_M$. The additional property of the Hamiltonian $\mathbf{H}^M$ is that its Lagrangian and effective Lagrangian are nonnegative. In fact, a direct calculation shows that
\begin{align*}
L^M_i(x,y,v,\bar{\mathbf{u}}_M(x))&=\sup_{p\in\mathbb R^n}(p\cdot v-H_i(x,y,p,\bar{\mathbf{u}}_M(x))+M)
\\ &=\sup_{p\in\mathbb R^n}(p\cdot v-H_i(x,y,p,\bar{\mathbf{u}}(x)))+M
\\ &\geqslant -H_i(x,y,0,0)-\Theta \frac{M}{\lambda}+M=\frac{\lambda-\Theta}{\lambda}M-H_i(x,y,0,0)\geqslant 0,
\end{align*}
where $|\bar{\textbf{u}}(x)|\leq \bar M/\lambda\leq M/\lambda$ is used in the first inequality.
Since $\mathbf{u}^\varepsilon_M-\bar{\mathbf{u}}_M=\mathbf{u}^\varepsilon-\bar{\mathbf{u}}$, it suffices to prove that $|\mathbf{u}^\varepsilon_M-\bar{\mathbf{u}}_M|\leqslant C\sqrt{\varepsilon}$. Therefore, in the following, we assume that $\mathbf{L}(x,y,v,\bar{\mathbf{u}}(x))$ and $\bar{\mathbf{L}}(x,v,\bar{\mathbf{u}}(x))$ are nonnegative without loss of generality.

We only prove that
\[
u^1_i(x)-\bar u_i(x)\geqslant -C\sqrt{\varepsilon}
\quad\text{for} \ x\in\R^n, \ i\in\{1,\ldots,m\},
\]
since we can prove the reverse inequality in a similar way.
We consider a partition
\[0=t_0\geqslant -t_1\geqslant -t_2\geqslant\dots\geqslant -t_k\geqslant \dots\]
of the interval $(-\infty,0]$ with $t_k\to +\infty$ as $k\to+\infty$.

For each $k\in\N\cup\{0\}$, by Lemma \ref{lem:HJ} we have
\begin{equation}\label{1'}
\begin{aligned}
&\int^{-t_k}_{-t_{k+1}}e^{\lambda s}L_i\Big(\gamma_1(s),\frac{\gamma_1(s)}{\varepsilon},\dot\gamma_1(s),\bar{\mathbf{u}}(\gamma_1(s))\Big)\, ds
\\ \geqslant &\int^{-t_k}_{-t_{k+1}}e^{\lambda s} L_i\Big(\gamma_1(-t_k),\frac{\gamma_1(s)}{\varepsilon},\dot\gamma_1(s),\bar{\mathbf{u}}(\gamma_1(-t_k))\Big)\, ds-E_k
\\ \geqslant & e^{-\lambda t_{k+1}}m^\varepsilon(-t_{k+1},-t_k,\gamma_1(-t_{k+1}),\gamma_1(-t_k);\gamma_1(-t_k),\bar{\mathbf{u}}(\gamma_1(-t_k)))-E_k
\\ \geqslant & e^{-\lambda t_{k+1}}\bar m(-t_{k+1},-t_k,\gamma_1(-t_{k+1}),\gamma_1(-t_k);\gamma_1(-t_k),\bar{\mathbf{u}}(\gamma_1(-t_k)))-Ce^{-\lambda t_{k+1}}\varepsilon-E_k,
\end{aligned}
\end{equation}
where
\[E_k:=\int^{-t_k}_{-t_{k+1}}e^{\lambda s}\bigg[\textrm{Lip}(H)|\gamma_1(s)-\gamma_1(-t_k)|+\Theta|D\bar{\mathbf{u}}|\cdot|\gamma_1(s)-\gamma_1(-t_k)|\bigg]\, ds.\]

Now, take $\gamma^k\in\AC([-t_{k+1},-t_k])$ with $\gamma^k(-t_k)=\gamma_1(-t_k)$ and $\gamma^k(-t_{k+1})=\gamma_1(-t_{k+1})$ satisfying
\begin{align*}
&\int^{-t_k}_{-t_{k+1}} \bar L_i(\gamma_1(-t_k),\dot\gamma^k(s),\bar{\mathbf{u}}(\gamma_1(-t_k)))\, ds
\\ =&\bar m(-t_{k+1},-t_k,\gamma_1(-t_{k+1}),\gamma_1(-t_k);\gamma_1(-t_k),\bar{\mathbf{u}}(\gamma_1(-t_k))).
\end{align*}
Then,
\begin{equation}\label{2'}
\begin{aligned}
&e^{-\lambda t_{k+1}}\bar m(-t_{k+1},-t_k,\gamma_1(-t_{k+1}),\gamma_1(-t_k);\gamma_1(-t_k),\bar{\mathbf{u}}(\gamma_1(-t_k)))
\\ \geqslant &e^{-\lambda t_{k+1}}\int^{-t_k}_{-t_{k+1}} \bar L_i(\gamma^k(s),\dot\gamma^k(s),\bar{\mathbf{u}}(\gamma^k(s)))\, ds-e^{-\lambda t_{k+1}}\bar E_k
\\ \geqslant &e^{-\lambda(t_{k+1}-t_k)}\int_{-t_{k+1}}^{-t_k}e^{\lambda s}\bar L_i(\gamma^k(s),\dot\gamma^k(s),\bar{\mathbf{u}}(\gamma^k(s)))\, ds-e^{-\lambda t_{k+1}}\bar E_k,
\end{aligned}
\end{equation}
where
\[\bar E_k
:=
\int^{-t_k}_{-t_{k+1}}\bigg[\textrm{Lip}(H)|\gamma^k(s)-\gamma_1(-t_k)|+\Theta|D\bar{\mathbf{u}}|\cdot|\gamma^k(s)-\gamma_1(-t_k)|\bigg]\, ds.
\]

By Lemma \ref{M2} and \eqref{Du(x)}, we have
\[|E_k|\leqslant \int^{-t_k}_{-t_{k+1}}e^{\lambda s}\bigg[\textrm{Lip}(H)M_0|s+t_k|+\Theta CM_0|s+t_k|\bigg] ds\leqslant e^{-\lambda t_k}K(t_{k+1}-t_k)^2,\]
where
\[
K:=\frac{1}{2}\big(\textrm{Lip}(H)M_0+\Theta CM_0\big).
\]
Similarly, there is $K>0$ such that $\bar E_k\leqslant K(t_{k+1}-t_k)^2$.

Now, combining \eqref{1'} and \eqref{2'} we obtain
\begin{align*}
u^1_i(x)&=\sum_{k=0}^{+\infty} \int^{-t_k}_{-t_{k+1}}e^{\lambda s}L_i\Big(\gamma_1(s),\frac{\gamma_1(s)}{\varepsilon},\dot\gamma_1(s),\bar{\mathbf{u}}(\gamma_1(s))\Big)\, ds
\\ &\geqslant e^{-\lambda \sup_{k}(t_{k+1}-t_k)}\sum_{k=0}^{+\infty}\int_{-t_{k+1}}^{-t_k}e^{\lambda s}\bar L_i(\gamma^k(s),\dot\gamma^k(s),\bar{\mathbf{u}}(\gamma^k(s)))\, ds
\\ &\quad-\sum_{k=0}^{+\infty}(E_k+e^{-\lambda t_{k+1}}\bar E_k+Ce^{-\lambda t_{k+1}}\varepsilon)
\\ &\geqslant e^{-\lambda \sup_{k}(t_{k+1}-t_k)}\bar u_i(x)-\sum_{k=0}^{+\infty}(E_k+e^{-\lambda t_{k+1}}\bar E_k+Ce^{-\lambda t_{k+1}}\varepsilon)
\end{align*}
Set $t_k=k\sqrt{\varepsilon}$ for $k\in\N\cup\{0\}$, and then
\[
u^1_i(x)\geqslant e^{-\lambda \sqrt{\varepsilon}}\bar u_i(x)-(2K+C)\frac{\sqrt{\varepsilon}}{\lambda}\sum_{k=0}^{+\infty}\lambda \sqrt{\varepsilon}e^{-k\lambda\sqrt{\varepsilon}}.
\]
By Lemma \ref{bd}, we have $M\geqslant \bar M$, it is easy to check that $\mathbf{0}$ is a subsolution of \eqref{hj0} when $\bar{\mathbf{H}}$ is replaced by $\bar{\mathbf{H}}-M$. Then we have $\bar u_i\geqslant 0$ by Lemma \ref{cp}. By Lemma \ref{bd},
\[e^{-\lambda\sqrt{\varepsilon}}\bar u_i(x)-\bar u_i(x)\geqslant -\lambda\sqrt{\varepsilon}\bar u_i(x)\geqslant -M\sqrt{\varepsilon}.\]
We also have
\[\sum_{k=0}^{+\infty}\lambda \sqrt{\varepsilon}e^{-k\lambda\sqrt{\varepsilon}}=\frac{\lambda \sqrt{\varepsilon}}{1-e^{-\lambda \sqrt{\varepsilon}}},\]
which is bounded by a constant $\bar C>0$ independent of $\varepsilon$ when $\lambda \sqrt{\varepsilon}<1$. We then conclude that
\[
u^1_i(x)\geqslant \bar u_i(x)-\bigg(M+\frac{(2K+C)\bar C}{\lambda}\bigg)\sqrt{\varepsilon}.
\]
\end{proof}

Next, we define a family of functions $\{\mathbf{u}^\ell\}_{\ell\in\N}$ by
\[
u^{\ell+1}_i(x)=\inf_{\gamma\in\cC(x)}
\int_{-\infty}^0 e^{\lambda s} L_i\Big(\gamma(s),\frac{\gamma(s)}{\varepsilon},\dot\gamma(s),\mathbf{u}^\ell(\gamma(s))\Big)\, ds
\quad \text{for} \ x\in\R^n, \ \ell\in\N.
\]
Let $\gamma_{\ell+1}:(-\infty,0]\to\mathbb R^n$ be a curve achieving the above infimum. It is standard to see that $u^{\ell+1}_i(x)$ is continuous, and is the solution to
\[\lambda u(x)+H_i\Big(x,\frac{x}{\varepsilon},Du(x),\mathbf{u}^{\ell}(x)\Big)=0\quad \textrm{for}\ x\in \mathbb R^n.\]

\begin{lemma}\label{unx}
For each $\ell=1,2,\dots$, we have
\begin{equation}\label{nx}
 \|\mathbf{u}^{\ell+1}-\mathbf{u}^\ell\|_{L^\infty}\leqslant \bigg(\frac{\Theta}{\lambda}\bigg)^\ell C\sqrt{\varepsilon}.
\end{equation}
\end{lemma}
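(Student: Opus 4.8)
The plan is to argue by induction on $\ell$, adapting the scheme of Lemma \ref{un} to the stationary, infinite-horizon setting: the finite-time integrals are replaced by the discounted functionals, and the factor $t$ appearing there is replaced by $\int_{-\infty}^{0}e^{\lambda s}\,ds=1/\lambda$. Two preliminary observations will be used. First, the Lagrangians inherit the Lipschitz modulus in $\mathbf{u}$ from (H5): since $L_i(x,y,v,\mathbf{u})=\sup_{p}(v\cdot p-H_i(x,y,p,\mathbf{u}))$, subtracting the two suprema and invoking (H5) gives $|L_i(x,y,v,\mathbf{u}^1)-L_i(x,y,v,\mathbf{u}^2)|\leqslant\Theta\max_{j}|u^1_j-u^2_j|$ for every $(x,y,v)$. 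Second, the preceding lemma supplies $|\mathbf{u}^1(x)-\bar{\mathbf{u}}(x)|\leqslant C\sqrt{\varepsilon}$ on $\R^n$, which is the only quantitative input needed and which already contains the curve-cutting / cell-problem homogenization estimate (Lemma \ref{lem:HJ}).

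For the base case $\ell=1$, I would let $\gamma_1\in\cC(x)$ be a minimizer of $u^1_i(x)$ and use it as an admissible competitor in the variational formula defining $u^2_i(x)$, which gives
\[
u^2_i(x)-u^1_i(x)\leqslant\int_{-\infty}^{0}e^{\lambda s}\Big[L_i\Big(\gamma_1(s),\tfrac{\gamma_1(s)}{\varepsilon},\dot\gamma_1(s),\mathbf{u}^1(\gamma_1(s))\Big)-L_i\Big(\gamma_1(s),\tfrac{\gamma_1(s)}{\varepsilon},\dot\gamma_1(s),\bar{\mathbf{u}}(\gamma_1(s))\Big)\Big]\,ds.
\]
Bounding the bracket by $\Theta\,|\mathbf{u}^1(\gamma_1(s))-\bar{\mathbf{u}}(\gamma_1(s))|\leqslant\Theta C\sqrt{\varepsilon}$ and integrating against $e^{\lambda s}$ over $(-\infty,0]$ yields $u^2_i(x)-u^1_i(x)\leqslant(\Theta/\lambda)C\sqrt{\varepsilon}$. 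Interchanging the roles of $\mathbf{u}^1$ and $\mathbf{u}^2$ (i.e. using a minimizer of $u^2_i$ as a competitor for $u^1_i$) gives the reverse inequality, hence $\|\mathbf{u}^2-\mathbf{u}^1\|_{\infty}\leqslant(\Theta/\lambda)C\sqrt{\varepsilon}$.

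For the inductive step, assuming \eqref{nx} holds with $\ell$ replaced by $k$, I would repeat the same computation with $\gamma_{k+1}\in\cC(x)$ a minimizer of $u^{k+1}_i(x)$ used as a competitor for $u^{k+2}_i(x)$; the integrand is then controlled by $\Theta\,|\mathbf{u}^{k+1}(\gamma_{k+1}(s))-\mathbf{u}^{k}(\gamma_{k+1}(s))|\leqslant\Theta(\Theta/\lambda)^{k}C\sqrt{\varepsilon}$ by the inductive hypothesis, and integrating against $e^{\lambda s}$ produces $(\Theta/\lambda)^{k+1}C\sqrt{\varepsilon}$. Symmetrizing once more closes the induction and gives \eqref{nx} for all $\ell$.

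I do not expect any genuine obstacle here: the $\mathbf{u}^\ell$-dependence enters only through a zeroth-order term, so there is no partition/curve-cutting argument as in Lemma \ref{u1}, and the only delicate point is the bookkeeping with the discount weight (in particular, $\lambda>0$ already suffices for the per-step bound \eqref{nx}; the assumption $\lambda>\Theta$ is used only afterwards, so that $\sum_{\ell\geqslant1}(\Theta/\lambda)^{\ell}<\infty$ makes $\{\mathbf{u}^\ell\}$ Cauchy in $C(\R^n)$, with limit identified as $\mathbf{u}^\varepsilon$ by stability of viscosity solutions). All of the substantive quantitative work has already been front-loaded into the estimate $|\mathbf{u}^1-\bar{\mathbf{u}}|\leqslant C\sqrt{\varepsilon}$.
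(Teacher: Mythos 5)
Your proposal is correct and follows essentially the same argument as the paper: induction on $\ell$, using the minimizer of $u^{\ell}_i$ (resp. $u^{\ell+1}_i$) as a competitor in the variational formula for $u^{\ell+1}_i$ (resp. $u^{\ell+2}_i$), bounding the difference of Lagrangians via the $\Theta$-Lipschitz dependence inherited from (H5), and integrating the exponential weight to pick up the factor $1/\lambda$. Your closing remark---that $\lambda>\Theta$ is not needed for the per-step estimate \eqref{nx} itself but only for summability of the geometric tail, hence for the Cauchy property of $\{\mathbf{u}^\ell\}$---is also an accurate reading of where that hypothesis enters.
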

\begin{proof}
We prove by induction. For $\ell=1$, we have
\begin{align*}
&u^2_i(x)-u^1_i(x)
\\ \leqslant & \int_{-\infty}^0 e^{\lambda s}\bigg[L_i\Big(\gamma_1(s),\frac{\gamma_1(s)}{\varepsilon},\dot\gamma_1(s),\mathbf{u}^1(\gamma_1(s))\Big)-L_i\Big(\gamma_1(s),\frac{\gamma_1(s)}{\varepsilon},\dot\gamma_1(s),\bar{\mathbf{u}}(\gamma_1(s))\Big)\bigg]\, ds
\\ \leqslant &\int_{-\infty}^0 e^{\lambda s}\Theta|\mathbf{u}^1(\gamma_1(s))-\bar{\mathbf{u}}(\gamma_1(s))|\, ds\leqslant \Theta C\sqrt{\varepsilon}\int_{-\infty}^0e^{\lambda s}\, ds=\frac{\Theta}{\lambda} C\sqrt{\varepsilon}.
\end{align*}
By symmetry, we conclude that
\[|u^2_i(x)-u^1_i(x)|\leqslant \frac{\Theta}{\lambda} C\sqrt{\varepsilon}.\]
Assume that \eqref{nx} holds for $\ell=k$. For $\ell=k+1$, we have
\begin{align*}
&u^{k+2}_i(x)-u^{k+1}_i(x)
\\ \leqslant & \int_{-\infty}^0 e^{\lambda s}\bigg[L_i\Big(\gamma_{k+1}(s),\frac{\gamma_{k+1}(s)}{\varepsilon},\dot\gamma_{k+1}(s),\mathbf{u}^{k+1}(\gamma_{k+1}(s))\Big)
\\ &-L_i\Big(\gamma_{k+1}(s),\frac{\gamma_{k+1}(s)}{\varepsilon},\dot\gamma_{k+1}(s),\mathbf{u}^k(\gamma_{k+1}(s))\Big)\bigg]\, ds
\\ \leqslant &\int_{-\infty}^0 e^{\lambda s}\Theta|\mathbf{u}^{k+1}(\gamma_{k+1}(s))-\mathbf{u}^{k}(\gamma_{k+1}(s))|\, ds\leqslant \Theta\bigg(\frac{\Theta}{\lambda}\bigg)^k C\sqrt{\varepsilon}\int_{-\infty}^0 e^{\lambda s}\, ds
=\bigg(\frac{\Theta}{\lambda}\bigg)^{k+1} C\sqrt{\varepsilon}.
\end{align*}
By symmetry, we conclude that
\[|u^{k+2}_i(x)-u^{k+1}_i(x)|\leqslant \bigg(\frac{\Theta}{\lambda}\bigg)^{k+1} C\sqrt{\varepsilon}.\]
This completes the proof.
\end{proof}

\begin{proof}[Proof of Theorem {\rm\ref{thm2}}]
By \eqref{nx}, the sequence $\{\mathbf{u}^\ell\}_{\ell\in\N}$ is a Cauchy sequence in $C_b(\mathbb R^n;\mathbb R^m)$ when $\lambda>\Theta$. Then $\mathbf{u}^\ell\to \mathbf{u}^\infty$ uniformly on $\mathbb R^n$ as $\ell\to+\infty$. By the stability of viscosity solutions, $\mathbf{u}^\infty$ is the solution $\mathbf{u}^\varepsilon$ of \eqref{hje}. By Lemma \ref{unx}, we have
\begin{align*}
|\textbf{u}^\ell(x)-\bar{\textbf{u}}(x)|&\le\sum_{j=2}^{\ell}|\textbf{u}^j(x)-\textbf{u}^{j-1}(x)|+|\textbf{u}^1(x)-\bar{\textbf{u}}(x)|
\\ &\le \sum_{j=0}^{\ell-1}\bigg(\frac{\Theta}{\lambda}\bigg)^jC\sqrt{\varepsilon}\le \frac{\lambda}{\lambda-\Theta}C\sqrt{\varepsilon},
\end{align*}
which implies 
\[|\mathbf{u}^\varepsilon(x)-\bar{\mathbf{u}}(x)|\leqslant \frac{\lambda}{\lambda-\Theta}C\sqrt{\varepsilon}
\quad\text{for all} \ x\in\R^n.
\]
This completes the proof.
\end{proof}

\section*{Acknowledgements}

The authors thank Yuxi Han, Jiwoong Jang, Hung V. Tran for their valuable comments to the first draft of the paper.
The work of HM was partially supported by the JSPS grants: KAKENHI
\#21H04431, \#22K03382, \#24K00531.
Part of this work was done during a visit of PN in
the university of Tokyo.
He acknowledges the hospitality of the university. 
We also would like to thank the anonymous referee for many helpful suggestions. 

\section*{Declarations}

\noindent {\bf Conflict of interest statement:} The authors state that there is no conflict of interest.

\medskip

\noindent {\bf Data availability statement:} Data sharing not applicable to this article as no datasets were generated or analysed during the current study.


\end{document}